\newcommand{\rr}{\mathbb{R}}
\newcommand{\dd}{\mathrm{d}}
\newcommand{\R}{\mathbb{R}}
\newcommand{\Div}{\mathrm{div}}
\newcommand{\argmin}{\operatorname{argmin}}
\newcommand{\norm}[1]{\Vert #1 \Vert}
\newcommand{\exit}{\partial\Omega_{\text{exit}}}
\newcommand{\wall}{\partial\Omega_{\text{wall}}}
\newcommand{\psxinput}[2]{\psscalebox{#2 #2}{#1}}
\theoremstyle{remark}
\newtheorem*{remark}{Remark}
\newtheorem{lemma}{Lemma}
\newtheorem{definition}{Definition}
\begin{document}

\markboth{Carrillo, Martin, Wolfram }{A local version of the Hughes model for pedestrian flow}

%
%

\title{A local version of the Hughes model for pedestrian flow }

\author{Jose A. Carrillo$^*$}


\thanks{$^*$
Department of Mathematics, Imperial College London, London SW7 2AZ, UK\\
carrillo@imperial.ac.uk}

\author{Stephan Martin$^\text{\textdagger}$}
\thanks{$^\text{\textdagger}$
Department of Mathematics RWTH Aachen University, 52074 Aachen, Germany \& \\
Department of Mathematics, Imperial College London, London SW7 2AZ, UK\\
martin@mathcces.rwth-aachen.de, stephan.martin@imperial.ac.uk, }

\author{Marie-Therese Wolfram$^\ddagger$}
\thanks{$^\ddagger$
Radon Institute for Computational and Applied Mathematics, \\
Austrian Academy of Sciences, Altenberger Strasse 69, 4040 Linz, Austria\\
mt.wolfram@ricam.oeaw.ac.at
}



\maketitle

\begin{abstract}
Roger Hughes proposed a macroscopic model for pedestrian dynamics, in which individuals seek to minimize their travel time but
try to avoid regions of high density. One of the basic assumptions is that the overall density of the crowd is known to every agent. In this paper we present a modification of the Hughes model to include local effects, namely limited vision, and a conviction towards decision making. The modified velocity field enables smooth turning and temporary waiting behavior. We discuss the modeling in the micro- and macroscopic setting as well as the efficient numerical simulation of either description. Finally we illustrate the model with various numerical experiments
and evaluate the behavior with respect to the evacuation time and the overall performance.
\end{abstract}



\section{Introduction}
The mathematical modeling and simulation of pedestrian dynamics,
such as large human crowds in public space or buildings, has
become a topic of high practical relevance. The complex behavior
of these large crowds poses significant challenges on the
modeling, analytic and simulation level. These aspects initiated a
lot of research in the mathematical community within the last
years, which we briefly outline below. Mathematical modeling
approaches for pedestrian dynamics can be roughly grouped into the
following categories:
\begin{enumerate}
\item \textit{Microscopic models} such as the social force model\cite{helbing1995,helbing2000,treuille2006} or
cellular automata approaches\cite{burstedde2001}.
\item \textit{Fluid dynamic approaches}\cite{colombo2012,moussaid2012,appert2011} and related \textit{macroscopic models}, see for example the popular Hughes model\cite{hughes2002,difrancesco2011,goatin2013,burger2014,VenutiBruno}.
\item \textit{Kinetic models}  \cite{bellomo2013,DARPT2013} which uses ideas from gas kinetics to models interactions between individuals via so-called collisions.
\item In \textit{optimal control}\cite{hoogendoorn2004} and \textit{mean field game approaches}\cite{lachapelle2011,dogbe2010} pedestrians act as rational individuals, which adjust
their velocity optimal to a specific cost.
\item \textit{Multiscale models} coupling between different scales to describe for example crowd leader dynamics \cite{CPT2011,BPT2012}.
\end{enumerate}
A detailed survey on crowd modelling can e.g. be found in Bellomo
and Dobge\cite{BellomoDogbe2011}. Several aspects are considered
to be important in the mathematical modeling to capture the
complex behavior in a correct way. For example repulsive forces
when getting too close to other individuals or obstacles play an
important role in the dynamics. Another popular assumption is the
fact that individuals act rationally and try to make the optimal
decision based on their actual knowledge level. Partial knowledge
of the overall pedestrian density or the domain is another
important factor which should be taken into account in the
modelling. While these nonlocal effects can be implemented quite
intuitively on the microscopic level, their translation for
macroscopic models is not straightforward. Most macroscopic
nonlocal models are based on the continuity equation for the
pedestrian density, where the nonlocal effects correspond to the
deviation of the crowd from its preferred
direction\cite{colombo2012,colombo2012-2,crippa2013}. This
deviation is determined by the average density felt by the
pedestrians and modelled via a convolution operator acting on the
velocity. The development of numerical schemes for conservation
laws with nonlocal effects gained substantial interest in the last
years. This was, among other factors, also initiated by the
development of nonlocal models in traffic
flow \cite{ACT2014,BG2014}.   \\
\noindent The original model of Hughes\cite{hughes2002} describes
fast exit and evacuation scenarios, where a group of people wants
to leave a domain $\Omega \subset \mathbb{R}^2$ with one or
several exits/doors and/or obstacles as fast as possible. The
driving force towards the exit is the gradient of a potential
$\phi = \phi(x,t)$, $x \in \Omega,~t > 0$. This potential
corresponds to the expected travel time to manoeuvrer through the
present pedestrian density towards an exit. Hughes assumed that
the global distribution of pedestrians is known to every
individual, an assumption not generally satisfied in real world
applications.
\\
In this paper we present a generalization of the classical Hughes
model, which includes local vision via partial knowledge of the
pedestrian density. We discuss the proper modeling setup, the
implementation of suitable numerical schemes as well as their
computational complexity. Furthermore we compare how the reduced
perception of each pedestrian effects the overall "performance" of
the crowd in evacuation scenarios. Inevitably, one expects the
crowd to behave less efficient as less information is available.
Quantifying how localised vision influences performance and
decision making is a very interesting question in terms of
collective behaviour. Surprisingly, it will turn out that
evacuation times can even improve. The question we investigate is
therefore complementary to mean-field game approaches, where
pedestrians anticipate future crowds states and hence are
\emph{more} capable than in the original Hughes' model
\cite{lachapelle2011,dogbe2010,burger2014}.
\\

\noindent This paper is structured as follows: We start with a
review on the modeling and analytic results of the classical
Hughes model for pedestrian flow and its microscopic
interpretation in section \ref{s:hughes}. In section
\ref{s:mathmod} we present the local version of the Hughes model
on the micro- and macroscopic level.  Section \ref{s:computmeth}
presents the numerical strategies for the microscopic and
macroscopic model. We compare the behavior and performance of the
models in Section \ref{s:numexp} and conclude with a discussion of
the proposed model in Section \ref{s:conclusions}.

\section{Hughes' model for pedestrian flow}\label{s:hughes}
\subsection{Original formulation and analytic results}

Let us start by presenting the original modeling assumptions and
the corresponding partial differential equation system of the
Hughes model for pedestrian flow. Hughes considered an exit
scenario, in which a crowd modelled by a macroscopic density $\rho
= \rho(x,t)$ wants to leave a domain as fast as possible. The
nonlinear PDE system for $\rho$ and the potential $\phi =
\phi(x,t)$ on the domain $\Omega \subset \R^2$ reads as:
\begin{subequations}\label{e:hughes}
\begin{align}
\frac{\partial \rho}{\partial t}-\Div(\rho f(\rho)^2 \nabla \phi) &= 0 \label{e:hughescont}\\
\norm{\nabla \phi} &= \frac{1}{f(\rho)}. \label{e:hugheseikonal}
\end{align}
\end{subequations}
The first equation describes the evolution of $\rho$ in time,
driven by the gradient of $\phi$ and weighted by a nonlinear
mobility $f = f(\rho)$. This mobility includes saturation effects,
i.e. degenerate behaviour when approaching a given maximum density
$\rho_{\max} \in \R^+$. Possible choices are $f(\rho) =
\rho-\rho_{\max}$ or $f(\rho) = (\rho-\rho_{\max})^2$ amongst
others. The former is inherited from the
Lighthill-Whitham-Richards model for one-dimensional traffic flow
\cite{LW1955,R1956}.
\\
The potential $\phi$ corresponds to the weighted shortest distance
to an exit in the following sense: Solving the eikonal equation
\eqref{e:hugheseikonal} determines the optimal path $\nabla \phi$
minimising the expected travel time throughout the crowd towards
an exit. This cost is measured as the inverse of $f(\rho)$, hence
the cost of walking through dense regions is high. Equation
\eqref{e:hugheseikonal} is also a stationary
Hamilton-Jacobi-Bellman equation, and the optimal path property of
$\nabla\phi$ can be rigorously derived \cite{BC,Holm}. The fact
that the potential $\phi$ solely determines the direction of the
flow can be easily seen as $f^2(\rho)\nabla\phi =
f(\rho)\nabla\phi / \norm{\nabla\phi}$ using
\eqref{e:hugheseikonal}.
\\

\noindent Hughes model \eqref{e:hughes} is supplemented with
different boundary conditions for the walls and the exits. We
assume that the boundary is divided into two parts: either
impenetrable walls $\wall \subset \partial \Omega$ or exits/doors
$\exit \subset \partial \Omega$, with $\wall \cap \exit =
\emptyset$. Typical conditions for the density $\rho$ in
\eqref{e:hughes} are zero flux boundary conditions at $\wall$,
which are either automatically satisfied as $\nabla\phi \cdot \vec
n =0$ or artificially enforced. The flux at $\exit$ has to be
defined according to the arriving density and our choices are
discussed in Section \ref{s:mathmod}. The boundary conditions of
\eqref{e:hugheseikonal} are set as $\phi(x,t) = 0$ for all $ x \in
\exit$.

\noindent There has been a lot of recent mathematical research on
the classical Hughes model
\cite{difrancesco2011,goatin2013,amadori2012,elkhatib2013}. Up to
the authors knowledge all analytic results are restricted to 1D
only, which is caused by the low regularity of the potential
$\phi$. This low regularity, i.e. $\phi \in C^{0,1}$, results in
the formation of shocks and rarefaction waves in the conservation
law. It is caused particularly by the existence of \emph{sonic
points}, which are hypersurfaces in space, where costs towards two
or more exists coincide, and therefore $\nabla\phi$ does not exist
and the orientational field is discontinuous. In spatial dimension
one the system can be reduced to the conservation law with a
discontinuous flux function. In this case it is possible to solve
the corresponding Riemann problem \cite{difrancesco2011}, which
also serves as a basis for different numerical schemes
\cite{goatin2013,BG2014}.

\subsection{Microscopic interpretation}

Hughes motivated system \eqref{e:hughes} on the macroscopic level
only. Recently Burger et al. \cite{burger2014} were able to give a
microscopic interpretation of \eqref{e:hughes}, which will serve
as a basis for our local particle model. Microscopic models based
on Hughes' modelling assumptions are also used in the field of
computer vision \cite{treuille2006}.

\noindent Let us consider $N$ particles with position $X^j =
X^j(t)$ and velocity $V^j = V^j(t)$, $j=1,\ldots N$. Then the
empirical density $\rho^N = \rho^N(t)$ is given by
\begin{align}\label{e:rhon}
\rho^N(t) =  \frac{1}{N} \sum_{j=1}^N \delta(x-X^j(t)).
\end{align}
Furthermore we introduce its smoothed approximation $\rho^N_g = \rho^N_g(t)$, given by
\begin{align*}
\rho^N_g(x,t) = (\rho^N * g)(x,t) = \frac{1}{N} \sum_{j=1}^N  g(x-X^j(t)),
\end{align*}
where the function $g = g(x)$ corresponds to a sufficiently smooth
positive kernel. The walking cost is given by the sum of a
weighted kinetic energy and the exit time, defined as $T_{exit} =
\sup\lbrace t>0 \mid x \in \Omega \rbrace$. Then the problem reads
as:
\begin{align}\label{e:minC}
\begin{split}
&C(X;\rho(t))  = \min_{(X,V)} \frac{1}2\int_t^{T+t} \frac{\norm{V(s)}^2}{f^2(\rho^N_g(\xi(s;t),t))} ds + \frac{1}2 T_{exit}(X,V),\\
&\text{ subject to } ~~\frac{d \xi}{ds}=V(s) \text{ and }
\xi(0)=X(t).
\end{split}
\end{align}
Hence the optimal trajectory is determined by 'freezing' the
empirical  density $\rho^N = \rho^N(t)$, in other words it
corresponds to extrapolating the empirical density $\rho^N$
in time when looking for the optimal trajectory.
\\
Burger et al. were able to show  that Hughes' model can be
formally derived from the optimality conditions of \eqref{e:minC}
and letting $T \rightarrow 0$ (corresponding to the long-time
behavior of the corresponding adjoint Hamilton-Jacobi equation).
\\
We will use this microscopic interpretation to propose a numerical
approximation by a particle method in Section 4 of Hughes type
models. In fact, \eqref{e:hughescont} is seen as a continuity
equation with velocity field $v(x,t)=-f(\rho)^2 \nabla \phi$
driven by \eqref{e:hugheseikonal}, and thus particles in
\eqref{e:rhon} are advected by the velocity field $v$, e.g.
$$
\frac{d X^j}{dt}=v(X^j(t),t)\,,\qquad j=1,\dots N\,.
$$


\section{A localized smooth Hughes-type model for pedestrian flow}\label{s:mathmod}

\noindent The Hughes model \eqref{e:hughes} assumes that at any
time $t>0$ the global distribution of all other individuals $\rho(x,t)$ is known to every
pedestrian. Therefore she chooses her optimal walking direction $\nabla
\phi$ in order to minimise its expected travel time/costs. Here,
all walking costs are based on the current density, which means
that pedestrians do not anticipate future dynamics of the crowd.
Instead they are capable to react to changes in the global density
ad-hoc as the path optimisation is repeated continuously in time.
In a mean-field game type model, the capabilities of pedestrians
would increase, as the planning decision of all agents can be
correctly predicted into the future. We follow an opposite
approach and reduce the capabilities of pedestrians, to obtain a
more realistic model.
\\
\noindent The assumption of continuous and complete perception of
global density information at current time is highly questionable
in practical situations. Limited vision cones and restricted
perception of global information comes through obstacles (walls,
buildings), physical distance, visual orientation or the inability
to see through a very dense crowd. Some effects of local vision
on the behaviour of crowds are obvious: in an evacuation scenario
with two exit corridors, which cannot be seen from each other,
pedestrians caught in a jam in front of one exit will not be able
to see whether the other exit is free or also jammed.

\noindent These considerations motivated a new version of
Hughes-type pedestrian dynamics based on localised perception of
information, which we introduce in this section. The decision of
each pedestrian is based on the perceived local density available
in a limited domain, which can be e.g. interpreted as a vision
cone. Furthermore
 a local interaction mechanism between individuals as well as a smoothening kernel
on the velocity field (to prevent unrealistic high frequency
oscillations in the direction of motion) are incorporated. We
begin with the detailed introduction of the macroscopic model and
discuss the microscopic analogue thereafter.
\subsection{Macroscopic equations}
The starting point of our model is the assumption that pedestrians
still perform the same path-optimisation selection as in the
Hughes' model, while the crowd state they act upon subjectively
depends on their position and the amount of information they are
able to perceive. Let $y\in\rr^2$ be an auxiliary variable and
$\phi(x,y):\rr^4\rightarrow\rr$ a parameterised potential, such
that
$
y\mapsto \phi(x_0,y)
$ 
denotes the cost potential calculated by pedestrians located at
$x_0\in\Omega$. To model space-dependent perception of
information, suppose that for every $x$ the domain $\Omega$
decomposes into a visible subdomain $V_x\ni x$ and a hidden or
invisible part  $H_x = \Omega \backslash V_x$. We propose the
following mechanism of restricted vision: If an area is visible
its density is known and priced accordingly in the path
optimisation. If however an area is not visible, its density is
thought to be a constant $\rho_H \in \R^+_0$, which we assume to
be uniform among all pedestrians. Exemplarily, setting $\rho_H=0$
implies that pedestrians assume that not visible areas to them are
empty, hence they will have a strong incentive to explore unseen
parts of the domain. On the contrary, pedestrians will avoid
invisible areas when $\rho_H\approx \rho_{\max}$, as they assume
high costs. An eikonal equation in $\Omega$ is hence solved in the
auxiliary variable $y$ for every point $x$ as
\begin{equation}
\norm{\nabla_y \phi(x,y )} = \displaystyle{\begin{cases}
 \frac{1}{f(\rho(y,t))} & y\in V_x \\[1mm]
 \frac{1}{f(\rho_H)} & y\in H_x
 \end{cases}}, 
 \label{model-local-eikonal}
\end{equation}
which gives the potential $\phi$ as function of two space
variables. Note, that this notion of local perception differs from
other recent work\cite{BG2014}, where a local average of the
density is used. Each pedestrian uses the cost potential at her
own position for the decision process. Computing $\nabla_y
\phi(x,x)$ hence would, after normalisation, give a new
orientational vector field to be used in the unchanged transport
equation. We however argue that it makes sense to include a notion
of conviction to the model, which has previously not been
considered. In order to do so, \eqref{model-local-eikonal} is
solved for every single exit. This results in the computation of
$M$ potentials $\phi_k=\phi_k(x,t)$, $k=1,\dots, M$, which allows
for cost comparison between exits, see Remark 3.2.

In regions of high density, decisions on the walking direction
towards any of $k=1,\dots,M$ exits $\partial\Omega_{E_k}$ cannot
arbitrarily deviate between neighbours. If a pedestrians prefers
to walk against the direction of a predominant local flow,
collision or friction losses in the movement will occur.
Especially on the macroscopic level, in which we take an aggregate
perspective, an incentive to change the flow cannot arise from one
point in space alone. As we do not model the granular level of
individual collision or friction, we propose the following
mechanism:
\begin{enumerate}
\item Each pedestrian carries an individual conviction strength $u(x)$
measuring its preference of its chosen exit over all others.
\item There exists a local consensus process within the crowd, which results
in the adjustment of the individual walking direction according to the
predominant direction around them.
\end{enumerate}
Hence, pedestrians adjust their own direction in order to prevail
the flow rather than obstructing it. This can be seen as either a
cognitive decision rule or a forced physical restriction. For a
compactly supported interaction kernel
$\mathcal{K}:\rr^2\rightarrow \rr$, we define the final walking
direction $\varphi(x)$ at any point $x\in\Omega$ as
\begin{equation}
\varphi = \frac{\rho u \star \mathcal{K}}{\rho\star\mathcal{K}},
\end{equation}
where the conviction $u(x)$ is given as
\begin{equation}
u =  \frac{\nabla_y \phi_{k^\text{opt}}}{\norm{\nabla_y
\phi_{k^\text{opt}}}}(\phi_{k^{\text{opt}+1}}-\phi_{k^\text{opt}}),
\end{equation}
obtained by comparing the cost potentials $\phi_k$, $k=1,\dots,
M$, associated to each of the exits:
\begin{gather}
 k^{\text{opt}}(x) = \argmin\limits_k \phi_k(x,x), \\
 k^{\text{opt+1}}(x) = \argmin\limits_{k\neq k^{\text{opt}}} \phi_k(x,x).
\end{gather}
Discontinuities in the velocity field due to the heterogeneity of
decision making amongst pedestrians are hence partially
compensated. To further smooth the model, we relax the strict
restriction of $\norm{\nabla\varphi}=1$ of Hughes' model and
replace the normalisation operator with a smooth approximation
$\mathcal{P}:\rr^2\rightarrow \rr^2$ defined as:

\begin{equation}
\mathcal{P}[x] := \begin{cases}
\frac{x}{\norm{x}} & \norm{x}>\ell, \\
\sin\left(\frac{\pi}{2 \arctan(k\ell)}\arctan(k\norm{x})\right)\frac{x}{\norm{x}} & 0<\norm{x}\leq\ell, \\
{0} & x=0 ,
\end{cases},
\label{e-relaxation}
\end{equation}
for some parameters $k,\ell>0.$ We stress that this is not a
technicality, as we here allow pedestrians to \emph{stop when
being undecided}. This is highly desirable from the modelling
point of view, though on the other hand the modulus of the flux
now is not a function of density alone, as one can see below.
\\
Next we discuss the boundary conditions for the eikonal equations.
Since we treat each exit separately, we set
$\left.\phi_k\right\vert_{\partial \Omega_{E_k}} = 0$ in the
computation of $\phi_k$. No boundary conditions are imposed on the
rest of the boundary  $\wall $.

\noindent Near-wall and near-obstacle effects have a strong
influence on the dynamics on constrained macroscopic evolutions.
We propose that pedestrians take into account walls and obstacles
in their computation of optimal paths. Hence it is natural to
include these effects as an additional fixed cost $W(x)$ on the
right-hand side of the eikonal equation
\eqref{model-local-eikonal}. We introduce a smooth layer profile
$\chi_w(x)\in[0,1]$, which identifies areas close to walls but
smoothly vanishes elsewhere and around exits to allow outflow. A
typical choice of $\chi_w$ is illustrated in Figure
\ref{fig-layerprofile}. For the sake of simplicity, we set
\begin{equation}
W(x) = \frac{\chi_w(x)}{f(\rho_{\max}-\epsilon)},
\label{e:wall}
\end{equation}
hence areas close to walls are penalised similar to high density areas.
\begin{figure}
\centering \psxinput{{
\begin{pspicture}(0,-1.7848612)(10.055555,1.7848612)
\psline[linecolor=black, linewidth=0.04](0.15555558,-1.0440277)(9.955556,-1.0440277)
\psline[linecolor=black, linewidth=0.04](0.15555558,-1.1440277)(9.955556,-1.1440277)
\psline[linecolor=black, linewidth=0.04](3.8555555,-1.0440277)(0.65555555,1.3559723)
\psline[linecolor=black, linewidth=0.04](5.8555555,-1.0440277)(9.055555,1.3559723)
\psline[linecolor=black, linewidth=0.04, linestyle=dashed, dash=0.17638889cm 0.10583334cm](3.8555555,-1.0440277)(3.8555555,-1.8440278)
\psline[linecolor=black, linewidth=0.04, linestyle=dashed, dash=0.17638889cm 0.10583334cm](5.8555555,-1.0440277)(5.8555555,-1.8440278)
\rput[bl](4.5555556,-1.4440278){exit}
\rput[bl](7.0555553,-1.4440278){wall}
\rput[bl](1.8555556,-1.4440278){wall}
\psline[linecolor=black, linewidth=0.04, linestyle=dashed, dash=0.17638889cm 0.10583334cm, arrowsize=0.05291666666666667cm 2.0,arrowlength=1.4,arrowinset=0.0]{->}(0.65555555,0.25597227)(0.65555555,1.1559722)
\psline[linecolor=black, linewidth=0.04, linestyle=dashed, dash=0.17638889cm 0.10583334cm, arrowsize=0.05291666666666667cm 2.0,arrowlength=1.4,arrowinset=0.0]{->}(0.65555555,-0.14402772)(0.65555555,-0.9440277)
\psline[linecolor=black, linewidth=0.04, linestyle=dashed, dash=0.17638889cm 0.10583334cm, arrowsize=0.05291666666666667cm 2.0,arrowlength=1.4,arrowinset=0.0]{->}(8.955556,0.15597227)(8.955556,1.1559722)
\psline[linecolor=black, linewidth=0.04, linestyle=dashed, dash=0.17638889cm 0.10583334cm, arrowsize=0.05291666666666667cm 2.0,arrowlength=1.4,arrowinset=0.0]{->}(8.955556,-0.14402772)(8.955556,-1.0440277)
\rput[bl](9.0,-0.044027727){$w$}
\rput[bl](0.65555555,-0.044027727){$w$}
\rput[bl](4.688889,0.48930562){$\chi_w$}
\rput[bl](1.75,-0.55){$T_1$}
\rput[bl](2.1888888,0.3){$0$}
\rput[bl](7.366667,0.3){$0$}
\rput[bl](7.6,-.55){$T_2$}
\psline[linecolor=black, linewidth=0.04](9.055555,1.3559723)(10.055555,1.3559723)
\psline[linecolor=black, linewidth=0.04](0.6666667,1.3559723)(0.0,1.3559723)
\rput[bl](5.6,-.85){$0$}
\rput[bl](4,-.85){$0$}
\rput[bl](9.2,-.85){$1$}
\rput[bl](0.2888889,-.85){$1$}
\rput[bl](9.388889,1.5448612){$0$}
\rput[bl](0.3888889,1.4893056){$0$}
\end{pspicture}
}}{.75}
\caption{Illustration of layer profile $\chi_w$: wall and obstacle
repulsion are embedded to the model with a fixed cost $W$ defined
in terms of the layer profile $\chi_w(x)\in[0,1]$, see
\eqref{e:wall}, which indicates proximity to walls less than a
width $w$, but vanishes away from obstacles and near exits to
allow a proper outflow of pedestrians. $\chi_w$ equals one at walls and is the linear function given by vertex values within triangles $T_1$, $T_2$ near exits.}
\label{fig-layerprofile}
\end{figure}
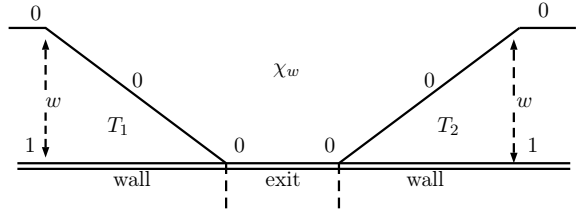

Finally all terms are coupled to the continuity equation with
velocity field $v(x,t)= -f(\rho) \mathcal{P}[\nabla \varphi]$ as
in the original model. At exits, we prescribe a maximum outflow,
given by $v(\xi,t)=-f(\rho) \vec n $ for all $ \xi\in\exit$.
Taking all these considerations into account the full macroscopic
model reads as:
\begin{subequations}
\begin{gather}
\partial_t \rho(x,t) + \nabla_x \cdot \left(- f(\rho(x,t)) \mathcal{P}[\nabla \varphi(x,t)] \rho(x,t) \right)  =0 \\
\varphi(x,t) = \frac{(\rho u \star \mathcal{K})(x,t)}{(\rho\star\mathcal{K})(x,t)} \label{e:localinteraction}\\
 u(x,t) =  \frac{\nabla_y \phi_{k^\text{opt}}(x,x,t)}{\norm{\nabla_y \phi_{k^\text{opt}}(x,x,t)}} (\phi_{k^{\text{opt}+1}}(x,x,t)-\phi_{k^\text{opt}}(x,x,t)) \label{e:conviction}\\
 k^{\text{opt}}(x,t) = \argmin\limits_k \phi_k(x,x,t) \\
 k^{\text{opt+1}}(x,t) = \argmin\limits_{k\neq k^{\text{opt}}} \phi_k(x,x,t)\label{e:secondbest} \\
\norm{\nabla_y \phi_k(x,y,t )} = \displaystyle{\begin{cases}
 \frac{1}{f(\rho(y,t)}  + W(y) & y\in V_x \\[1mm]
 \frac{1}{f(\rho_H)} & y\in H_x \\
 \end{cases}}\label{e:localeikonal}\\
 \text{s.t.:} \,\left.\phi_k\right\vert_{\partial \Omega_{E_k}} = 0 , k=1,\dots,M \, , \forall t \\
  \text{s.t.:} \,\mathcal{P}[\varphi(\xi,t)] = \vec n \,,\,\, \forall \xi\in\exit\,,\,\,\, \label{e:maxoutflow}
  \text{s.t.:} \,\rho(x,0)=\rho_0(x).
\end{gather}\label{e:localmodel}
\end{subequations}
We conclude the section with remarks on specific modeling assumptions.
\begin{remark}[vision cones]
We have set aside formal statements regarding assumptions on the visible set $V_x$, but clearly we think of at least regular, connected and closed sets.
A necessary condition is
\begin{equation}\label{e:Vinfo}
x \in  V^\circ_x = \operatorname{int} V_x,
\end{equation}
which implies that every pedestrian perceives some information
from all directions. This restriction rules out e.g. angular
vision cones (see Degond et.al.\cite{DRMPT2013}) where pedestrians
do not see what is happening behind them. In our model,
\eqref{e:Vinfo} is necessary to exclude unrealistic situations
where the chosen walking direction points outside the visible
area. The inclusion of angular-dependent vision cones is certainly
possible, but would imply a velocity-dependency and lead towards a
second-order macroscopic model.
\end{remark}

\begin{remark}[conviction term]
The introduction of the conviction term $u(x)$ requires the
computation of exit costs $\phi_k$ via the eikonal equation for
individual exits, which appears to be a significant complication
of the model. However, it is worth noting that the mechanism is
almost identical to the original model. In equation
\eqref{e:hugheseikonal} the costs of walking towards any of the
$K$ exits are compared, but only the minimal costs are used. Here,
we simply store more information. This connection is also
illustrated by looking at the numerical schemes for solving the
eikonal equation (see also Section ?): If a Fast Sweeping Method
is used in e.g. a corridor with two exists, this essentially
corresponds to solving for each exit separately if the
minimization step is left out. If a Fast Marching Method is used,
the conviction is directly related to the sequence in which
vertices are promoted, with the least convinced vertex being
assigned a cost the latest.
\end{remark}

\begin{remark}[waiting behavior]
The relaxation $\norm{\mathcal{P}[x]}\leq 1$ implies that the
modulus of the flux can be less than $f(\rho)\rho$ when
pedestrians are undecided. This makes a rigorous analysis of the
model equations a difficult task, which is not tackled in this
work. The benefit of our formulation is that the problem of
discontinuous velocity fields at sonic points has disappeared.
Pedestrians at those hyper surfaces will not move unless the sonic
points move.
\end{remark}

\subsection{The one-dimensional case}
Consider a one-dimensional corridor $\Omega=[0,1]$ with two exits
and the uniform radial vision cone $V_x=[x-L/2,x+L/2]\cap \Omega$
of length $L$. Exit costs towards the left and right exit are
computed at $y\in V_x$ as
\begin{gather*}
\phi_L(x,y,t) = \int\limits_{z<y, z\in H_x} \frac{1}{f(\rho_H)} \dd z + \int\limits_{z<y, z\in V_x} \frac{1}{f(\rho(z,t))} \dd z,  \\
\phi_R(x,y,t) = \int\limits_{z>y, z\in H_x} \frac{1}{f(\rho_H)} \dd z + \int\limits_{z>y, z\in V_x} \frac{1}{f(\rho(z,t))} \dd z , \\
u(x,t)=\phi_L(x,x,t)-\phi_R(x,x,t),
\end{gather*}
as illustrated in Fig. \ref{fig-1dillustration}.
\begin{figure}
\centering \psxinput{\begin{pspicture}(0,-2.8291457)(12.033044,2.8291457)
\definecolor{colour0}{rgb}{0.8,0.8,1.0}
\psline[linecolor=black, linewidth=0.04](0.31304348,-1.550885)(11.913043,-1.550885)
\psline[linecolor=black, linewidth=0.04](0.31304348,-1.3508849)(0.31304348,-1.7508849)
\psline[linecolor=black, linewidth=0.04](11.913043,-1.3508849)(11.913043,-1.7508849)
\psline[linecolor=black, linewidth=0.04](5.9130435,-1.3508849)(5.9130435,-1.7508849)
\rput[bl](0.31304348,-2.1508849){$0$}
\rput[bl](5.9130435,-2.1508849){$x_0$}
\rput[bl](11.913043,-2.027808){$1$}
\psframe[linecolor=black, linewidth=0.04, fillstyle=solid,fillcolor=colour0, dimen=outer](11.913043,-0.8030588)(8.869565,-1.5856675)
\psframe[linecolor=black, linewidth=0.04, fillstyle=solid,fillcolor=colour0, dimen=outer](3.652174,-0.055232737)(1.5652174,-1.5856675)
\psline[linecolor=black, linewidth=0.04, arrowsize=0.05291666666666667cm 2.0,arrowlength=1.4,arrowinset=0.0]{<->}(2.3347826,-2.3943632)(9.639131,-2.3943632)
\rput[bl](5.7130437,-2.8291457){$V_{x_0}$}
\psline[linecolor=black, linewidth=0.04, linestyle=dashed, dash=0.17638889cm 0.10583334cm](2.4130435,-2.1508849)(2.4347825,2.775202)
\psline[linecolor=black, linewidth=0.04, linestyle=dashed, dash=0.17638889cm 0.10583334cm](9.534782,-2.1552327)(9.534782,2.775202)
\psframe[linecolor=black, linewidth=0.04, fillstyle=solid,fillcolor=colour0, dimen=outer](8.347826,1.6143324)(7.652174,-1.5856675)
\psline[linecolor=black, linewidth=0.04](0.3478261,-1.5856675)(0.3478261,2.7621586)(0.3478261,2.7621586)
\psline[linecolor=black, linewidth=0.04](0.3478261,-1.5856675)(2.4347825,-0.8030588)(3.652174,0.6752021)(5.9130435,1.3708543)
\psline[linecolor=black, linewidth=0.04](11.913043,-1.5856675)(9.565217,-0.7161023)(8.869565,-0.10740665)(8.347826,0.24041943)(7.478261,2.153463)(5.9130435,2.6752021)(5.9130435,2.6752021)
\psline[linecolor=black, linewidth=0.04, linestyle=dotted, dotsep=0.10583334cm](5.9130435,1.3708543)(7.2173915,1.8056369)
\psline[linecolor=black, linewidth=0.04, linestyle=dotted, dotsep=0.10583334cm](5.826087,2.6752021)(5.478261,2.8491151)
\rput[bl](4.552174,1.1099846){$\phi_L$}
\rput[bl](7.517391,2.201289){$\phi_R$}
\rput[bl](4.8391304,1.8665063){$|u(x_0)|$}
\psline[linecolor=black, linewidth=0.04, linestyle=dashed, dash=0.17638889cm 0.10583334cm, arrowsize=0.05291666666666667cm 2.0,arrowlength=1.4,arrowinset=0.0]{<->}(5.9130435,1.4578108)(5.9130435,2.5882456)
\psframe[linecolor=black, linewidth=0.04, fillstyle=solid,fillcolor=colour0, dimen=outer](6.12357,-1.4140428)(5.702517,-1.5719376)
\end{pspicture}}{.75} \caption{Illustration of
path optimisation mechanism in 1D: A pedestrian located at $x_0$
computes and compares the cost potential $\phi_L,\phi_R$ of left
vs.\@ right exit in a corridor $[0,1]$. Next to its own negligible
density, the present crowd consists of three blocks. Outside the
vision cone $V_x$, the evacuation costs grow linearly at constant
rate, as the local density is unknown. Within $V_x$, the slope of
the cost potential increases with the pedestrian density.
Preference is then given towards the exit with lower estimated
cost. The conviction towards this decision is given as the cost
benefit $|u(x_0)|=|\phi_L-\phi_R|$.} \label{fig-1dillustration}
\end{figure}
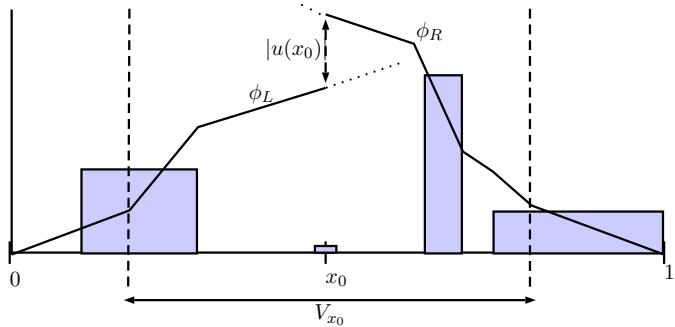
The cost potential $\phi$ is two-dimensional and $\partial_y
\phi(x,y)$ gives the preferred walking direction that a pedestrian
located at $x$ seeing $V_x$ assigns to $y\in[0,1]$. The walking
directions chosen prior to the consensus process are hence given
as $\partial_y \phi(x,x)$ along the diagonal of $[0,1]^2$. For
every fixed $x\in[0,1]$, there is a unique sonic point $z(x)$,
where $\phi_L(x,z(x))=\phi_R(x,z(x))$ and $\partial_y
\phi(x,z(x))$ does not exist. As illustrated in Fig.
\ref{f:1Ddirectionswitching}, the individually preferred walking
directions can switch multiple times between both exits, depending
on the current density and the vision cones. At switching points,
the preferred directions can point outwards (separation) or
inwards (collision) and only the weighted interaction process
\eqref{e:localinteraction}-\eqref{e:conviction} generates a smooth
velocity profile. In the Hughes' model, all vision cones are
identical and there is a single separation point.
\begin{figure}
\centering
\begin{tabular}{c c}
\psxinput{\input{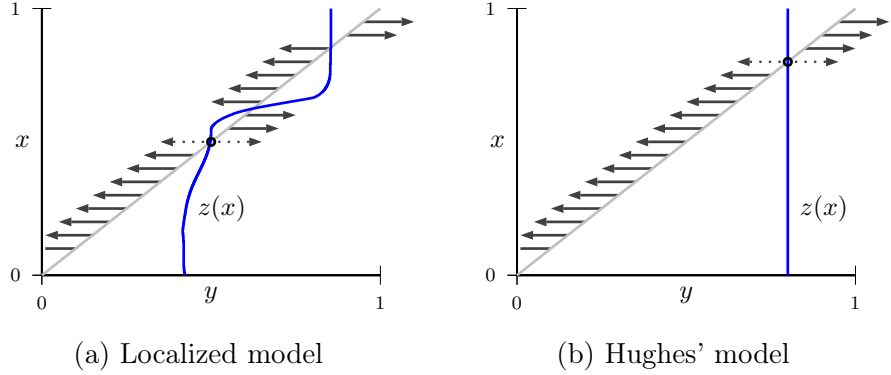}}{.9}&
\psxinput{\psset{xunit=5cm,yunit=3.9435cm}
\begin{pspicture*}(-.2,-.2)(1.1,1.1)
\psaxes[labelFontSize=\scriptstyle,Ox=0,Oy=0,Dx=1,Dy=1]{-}(0,0)(0,0)(1,1)
\psline[linecolor=darkgray, linewidth=0.04, arrowsize=0.053cm 2.0,arrowlength=1.4,arrowinset=0.0]{-}(0.1,0.1)(0.01,0.1)
\psline[linecolor=darkgray, linewidth=0.04, arrowsize=0.053cm 2.0,arrowlength=1.4,arrowinset=0.0]{->}(0.15,0.15)(0.01,0.15)
\psline[linecolor=darkgray, linewidth=0.04, arrowsize=0.053cm 2.0,arrowlength=1.4,arrowinset=0.0]{->}(0.2,0.2)(0.05,0.2)
\psline[linecolor=darkgray, linewidth=0.04, arrowsize=0.053cm 2.0,arrowlength=1.4,arrowinset=0.0]{->}(0.25,0.25)(0.1,0.25)
\psline[linecolor=darkgray, linewidth=0.04, arrowsize=0.053cm 2.0,arrowlength=1.4,arrowinset=0.0]{->}(0.3,0.3)(0.15,0.3)
\psline[linecolor=darkgray, linewidth=0.04, arrowsize=0.053cm 2.0,arrowlength=1.4,arrowinset=0.0]{->}(0.35,0.35)(0.2,0.35)
\psline[linecolor=darkgray, linewidth=0.04, arrowsize=0.053cm 2.0,arrowlength=1.4,arrowinset=0.0]{->}(0.4,0.4)(0.25,0.4)
\psline[linecolor=darkgray, linewidth=0.04, arrowsize=0.053cm 2.0,arrowlength=1.4,arrowinset=0.0]{->}(0.45,0.45)(0.3,0.45)
\psline[linecolor=darkgray, linewidth=0.04, arrowsize=0.053cm 2.0,arrowlength=1.4,arrowinset=0.0]{->}(0.5,0.5)(0.35,0.5)
\psline[linecolor=darkgray, linewidth=0.04, arrowsize=0.053cm 2.0,arrowlength=1.4,arrowinset=0.0]{->}(0.55,0.55)(0.4,0.55)
\psline[linecolor=darkgray, linewidth=0.04, arrowsize=0.053cm 2.0,arrowlength=1.4,arrowinset=0.0]{->}(0.6,0.6)(0.45,0.6)
\psline[linecolor=darkgray, linewidth=0.04, arrowsize=0.053cm 2.0,arrowlength=1.4,arrowinset=0.0]{->}(0.65,0.65)(0.5,0.65)
\psline[linecolor=darkgray, linewidth=0.04, arrowsize=0.053cm 2.0,arrowlength=1.4,arrowinset=0.0]{->}(0.7,0.7)(0.55,0.7)
\psline[linecolor=darkgray, linewidth=0.04, arrowsize=0.053cm 2.0,arrowlength=1.4,arrowinset=0.0]{->}(0.75,0.75)(0.6,0.75)

\psline[linecolor=darkgray, linewidth=0.04, arrowsize=0.053cm 2.0,arrowlength=1.4,arrowinset=0.0,linestyle=dotted]{->}(0.8,0.8)(0.65,0.8)
\psline[linecolor=darkgray, linewidth=0.04, arrowsize=0.053cm 2.0,arrowlength=1.4,arrowinset=0.0,linestyle=dotted]{->}(0.8,0.8)(0.95,0.8)
\psline[linecolor=darkgray, linewidth=0.04, arrowsize=0.053cm 2.0,arrowlength=1.4,arrowinset=0.0]{->}(0.85,0.85)(1,0.85)
\psline[linecolor=darkgray, linewidth=0.04, arrowsize=0.053cm 2.0,arrowlength=1.4,arrowinset=0.0]{->}(0.9,0.9)(1.05,.9)
\psline[linecolor=darkgray, linewidth=0.04, arrowsize=0.053cm 2.0,arrowlength=1.4,arrowinset=0.0]{->}(0.95,0.95)(1.1,.95)

\uput[-90](0.5,0){$y$}
\uput[180](0,0.5){$x$}
\uput[0](0.8,0.25){$z(x)$}

\psline[linecolor=lightgray, linewidth=0.04]{-}(0,0)(1,1)

\psline[linecolor=lightgray, linewidth=0.04, linecolor=blue]{-}(0.8,0)(0.8,1)

\pscircle[linecolor=black, linewidth=0.04, dimen=outer](0.8,0.8){0.075}

\end{pspicture*}}{.9}\\
(a)  Localized model &
 (b) Hughes' model
 \end{tabular}
\caption{Illustration of turning decisions of a 1D population for
a given $\rho(x,t)$: For every point $x,$ we show the individual
sonic point $z(x)$, where the costs $
\phi_L(x,z(x))=\phi_R(x,z(x))$ coincide. The preferred walking
direction for a pedestrian at $x$ is found along the diagonal
$(x,x)$. If the sonic point is to the left, the pedestrians aim to
walk towards the right and vice-versa. No direction is preferred
and the conviction is zero if the curve of sonic points intersects
the diagonal. (a) Local vision cones: The preferred direction
alters and creates multiple points of separation and collision.
The resulting velocity field is obtained by the smoothening
interaction process
\eqref{e:localinteraction}-\eqref{e:conviction} . (b) Hughes'
model: All vision cones coincide, hence there is one identical
sonic point common to all pedestrians.}
\label{f:1Ddirectionswitching}
\end{figure}

\subsection{Microscopic interpretation}

We conclude this section by briefly commenting on the modelling of
local vision at the microscopic level. The microscopic
modification is straightforward and uses the same ideas as at the
macroscopic level. It corresponds to updating the position $X =
X(t)$ according to a potential which depends on local information
only. Its calculation is based on the same equations as in the
macroscopic model \eqref{e:localeikonal} but using the smoothed
empirical density $\rho^N_g$ instead of $\rho$. The position
update is based on equations
\eqref{e:localinteraction}-\eqref{e:secondbest}. Hence individuals
choose the path towards the exit with the lowest cost, but weigh
their decision according to the predominant direction chosen
around them. For further details on the implementation we refer to
Algorithm \ref{a:microsim} presented in Section
\ref{s:computmeth}.

\subsection{Analysis of the domains of dependence}

In this subsection, we will discuss some mathematical properties
of the solutions of the eikonal equations \eqref{e:localeikonal}.
From the construction of the model, the potential $\phi(x,y,t)$
has to be computed for every $x\in \Omega$ on the entire domain
$\Omega$, which counterbalances the idea of locality and increases
the computational cost considerably. We show here, that the
computation of the potential can actually be reduced to a subset
of $\Omega$, called the effective domain of dependence, for every
$x$. Only this subset, which contains $V_x$, is considered in the
individual local planning problem and corresponds to the reduction
of the computational cost.

The following proofs rely crucially on the optimal path property of
the characteristics associated to the eikonal equation
\eqref{e:localeikonal}. We recall\cite{Holm,BC} that by Fermat's
principle the characteristic paths associated to $\phi(x,y,t)$,
given by the solution of:
\begin{equation}\label{e:chareikonal}
\gamma_{x,t}^z(s) \subset \Omega: \gamma(0)=z, \dot{\gamma}(s) =-
\nabla \phi(x,\gamma(s),t) \,\, \mbox{ for all} \, s\geq 0\,,
\end{equation}
are the optimal paths for the cost defined as
$$
c(y,t)=\displaystyle{\begin{cases}
 \frac{1}{f(\rho(y,t))}  + W(y) & y\in V_x \\[1mm]
 \frac{1}{f(\rho_H)} & y\in H_x \\
 \end{cases}}\,.
$$
Moreover, the potential is the value function for that cost. Hence
it is decreasing along these paths and satisfies the optimality
condition
\begin{equation}\label{e:optimality}
\phi(x,\gamma_{x,t}^z(a),t)-\phi(x,\gamma_{x,t}^z(b),t)= \int_a^b
c(\gamma_{x,t}^z(s),t) \,ds\,, \mbox{ for all } 0\leq a<b\,,
\end{equation}
being zero at its corresponding exit $\partial
\Omega_{\text{exit}}$. Furthermore, the curves $\gamma_{x,t}^z$
are the optimal paths to achieve the exit, i.e., they verify the
following global optimality condition
\begin{equation}\label{e:optimality2}
+\phi(x,z,t)= \int_0^{T_z} c(\gamma_{x,t}^z(s),t) \,ds \leq
\int_0^{\tilde T_z} c(\tilde\gamma(s),t) \,ds \,,
\end{equation}
for all $\tilde\gamma$ curves joining $z$ to any point in the exit
$\partial \Omega_{\text{exit}}$, where $T_z$ is the optimal time
to achieve the exit for the point $z\in\Omega$ and $\tilde T_z$ is
the time to achieve the exit for the path $\tilde\gamma$.

\begin{lemma}
\label{lemma-Mreduction} Consider any fixed $V_x\subset\Omega$ and
that $f(\rho)>0$, $0\leq\rho<\rho_{\max}$. Let $\phi_H$ be the
global solution of the eikonal equation $\norm{\nabla \phi_H } =
1/f(\rho_H), \phi_H(x)=0 \text{ on } \partial
\Omega_{\text{exit}}$. Define the minimum of $\phi_H$ in $V_x$ as
\begin{equation*}
m_H:= \min_{z\in V_x} \phi_H(z)
\end{equation*}
and the corresponding superlevel set of $\phi_H$ as
\begin{equation*}
M_H := \{x\in \Omega: \phi_H(x) \geq m_\phi \}.
\end{equation*}
Then the problem of computing the local potential $\phi(x,y,t)=:
\tilde{\phi}(y)$ out of \eqref{e:localeikonal} on $\Omega$ reduces
to the following problem on $M_\phi$:
\begin{equation*}
\begin{cases} \norm{\nabla_y \tilde{\phi}(y) }= \frac{1}{f(\rho(y,t))}+W(y) &\text{ in } V_x \\
\norm{\nabla_y \tilde{\phi}(y) }= \frac{1}{f(\rho_H)} &\text{ in } M_H \backslash V_x \\
\tilde{\phi}(y) = m_H &\text{ on } \partial M_H \backslash
\wall\,\, (\text{B.C. } )
\end{cases} \,.
\end{equation*}
\end{lemma}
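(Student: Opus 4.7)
The plan is to show that the restriction of $\tilde\phi$ to $M_H$ is uniquely determined by the Dirichlet--eikonal problem stated in the lemma, so that computation on the (smaller) set $M_H$ suffices. I would organise the proof in three steps: the inclusion $V_x\subset M_H$, identification of the boundary trace on $\partial M_H\setminus\wall$, and a viscosity-uniqueness argument on $M_H$.

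The inclusion $V_x\subset M_H$ is immediate from $m_H=\min_{z\in V_x}\phi_H(z)$: every $z\in V_x$ satisfies $\phi_H(z)\geq m_H$. In particular, $\Omega\setminus M_H=\{\phi_H<m_H\}$ is disjoint from $V_x$, so on $\Omega\setminus M_H$ the local cost in \eqref{e:localeikonal} coincides with the uniform rate $1/f(\rho_H)$ governing $\phi_H$. Hence the two equations trivially agree outside $M_H$, and the task reduces to controlling $\tilde\phi$ inside $M_H$.

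The main step is to prove $\tilde\phi(y)=m_H$ for every $y\in\partial M_H\setminus\wall$. For the upper bound I would use the $\phi_H$-characteristic $\gamma$ emanating from such a $y$: by \eqref{e:optimality} the function $\phi_H$ decreases from $m_H$ to $0$ along $\gamma$, so $\gamma\subset\{\phi_H\leq m_H\}\subset\Omega\setminus V_x$. Along this curve the local cost of \eqref{e:localeikonal} equals $1/f(\rho_H)$, so $\gamma$ is admissible for the local problem and yields a path with total cost exactly $\phi_H(y)=m_H$; hence $\tilde\phi(y)\leq m_H$. For the reverse inequality I would apply the global optimality relation \eqref{e:optimality2} to the local cost: any competitor path from $y$ to an exit must eventually pass through $\Omega\setminus M_H$, where the cost rate is the uniform $1/f(\rho_H)$; on the portions in $V_x$ the density of $1/f(\rho)+W$ is bounded below by $1/f(\rho_H)$ under the standing modelling convention that $\rho_H$ represents the minimal walking cost assigned to unseen areas. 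Integrating along any such path and comparing with the definition of $\phi_H$ gives $\tilde\phi(y)\geq \phi_H(y)=m_H$.

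With $\tilde\phi|_{\partial M_H\setminus\wall}\equiv m_H$ established, the restriction $\tilde\phi|_{M_H}$ is a viscosity solution of the Hamilton--Jacobi--Bellman problem stated in the lemma: the equation is inherited on $V_x$ and on $M_H\setminus V_x$, the no-flux behaviour on $\wall$ is unchanged, the Dirichlet trace on $\partial M_H\setminus\wall$ is exactly $m_H$, and the homogeneous condition on any exit inside $M_H$ is preserved. Standard uniqueness of the viscosity solution of such a Dirichlet problem on the bounded set $M_H$ then yields the stated reduction. I expect the delicate step to be the lower bound in the boundary-trace argument: ruling out that a competitor path undercuts $m_H$ by a shortcut through $V_x$ relies on comparing the local cost density with $1/f(\rho_H)$, which is natural for the standard modelling choices of $f$ and $\rho_H$ (and is reinforced by the nonnegativity of $W$), but would require separate justification in degenerate regimes where $\rho$ in $V_x$ falls below $\rho_H$ while $W\equiv 0$.
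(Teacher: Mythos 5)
Your overall skeleton (identify the boundary trace on $\partial M_H\setminus\wall$, then conclude by uniqueness of the reduced problem on $M_H$) matches the intent of the paper, and your upper bound $\tilde\phi(y)\leq m_H$ via the $\phi_H$-characteristic is correct (modulo the harmless point that $\{\phi_H\leq m_H\}$ may touch $V_x$ at the minimizer; the characteristic leaves it instantly since $\phi_H$ strictly decreases along it). The genuine gap is in your lower bound. You derive $\tilde\phi(y)\geq m_H$ from the pointwise comparison $1/f(\rho(y,t))+W(y)\geq 1/f(\rho_H)$ on $V_x$, which you call a standing modelling convention. It is not one: Lemma \ref{lemma-Mreduction} assumes only $f(\rho)>0$, and the paper deliberately reserves any cost monotonicity for Lemma \ref{lemma-Vreduction} (where $\rho_H=0$ makes the hidden-area rate the cheapest one). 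Under the hypotheses of the present lemma your inequality can fail badly: with the LWR flux $f(\rho)=\rho(1-\rho)$ and $\rho_H$ close to $\rho_{\max}$ --- a choice the paper explicitly contemplates (pedestrians avoiding unseen areas) --- the hidden rate $1/f(\rho_H)$ is enormous while the visible rate is moderate, so the comparison, and with it your boundary trace, collapses precisely in part of the regime the lemma covers. You flag this yourself at the end, but it is not a degenerate corner case; it is the case the lemma must handle.

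The repair, which is the paper's actual argument, uses no comparison of cost rates at all. Take any competitor path from $y\in\partial M_H$ to an exit that meets $V_x$, and look at its \emph{last} exit from $V_x$, at a point $w'\in\partial V_x\subset M_H$, so $\phi_H(w')\geq m_H$. The remaining leg from $w'$ to the exit lies entirely in the hidden region, where the rate is exactly $1/f(\rho_H)$; since $\phi_H$ is the value function for that constant rate, this final leg alone costs at least $\phi_H(w')\geq m_H$, and all earlier legs have positive cost because $c>0$. Paths that avoid $V_x$ altogether cost at least $\phi_H(y)=m_H$ for the same reason. Hence $\tilde\phi(y)\geq m_H$ no matter how cheap the visible region is. The paper packages this as a statement about optimal paths: since the potential decreases along optimal trajectories \eqref{e:optimality}, and one has $\phi(x,\cdot,t)>m_H$ on $\operatorname{int} M_H$ (by positivity of $c$) while $\phi(x,z,t)<m_H$ for $z\in\Omega\setminus M_H$ (your upper-bound competitor shows this), an optimal path started outside $M_H$ can never cross $V_x$; therefore outside $M_H$ the local potential solves the constant-coefficient eikonal equation and coincides with $\phi_H$, which yields the boundary datum $m_H$ on $\partial M_H\setminus\wall$ and the stated reduction. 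Replacing your cost comparison by this path-decomposition (equivalently, path-monotonicity) argument closes the gap.
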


\begin{proof}
If an exit is visible then $m_H=0, M_H=\Omega$ and the assertion
is trivial. If no exit is visible then by construction $V_x
\subset M_H$ and $\phi_H= m_H>0$ on $\partial M_H$. As the walking
costs are always positive, $c(y,t)>0$, we get $\phi(x,y,t)>m_H$
for all $y \in \operatorname{int} M_H$. On the other hand, any
point $z\in\Omega\backslash M_H$ satisfies $\phi(x,z,t)<m_H$ and
hence $\gamma_{x,t}^z(s)$ does not intersect $V_x$, otherwise the
cost should be larger at a middle point than initially
contradicting the optimality of the path $\gamma_{x,t}^z(s)$ in
\eqref{e:optimality}. Hence $\partial M_H$ is the maximal level
set consisting of points whose optimal paths do not cross $V_x$,
and therefore, $\phi(x,z,t)$ can be computed from
\eqref{e:localeikonal} with constant right-hand side outside
$M_H$.
\end{proof}

\begin{definition}
Consider a fixed visibility area $V_x$. For a $z\in\Omega$, denote the
\emph{default optimal path} $\gamma_H^z$ as the parameterised
curve associated to a gradient walk along $\phi_H$ starting in
$z$, that is
\begin{equation*}
\gamma_H^z(s) \subset \Omega: \gamma(0)=z, \dot{\gamma}(s) =-
\nabla \phi_H(\gamma(s)) \,\, \forall \, s\geq 0.
\end{equation*}
Next, define the \emph{characteristics' shadow} $V^\#$ as the set of all points, whose default optimal paths crosses the visibility area, hence
\begin{equation*}
V^\# := \{ z\in\Omega: \gamma_H^z \cap \operatorname{int} V_x \neq
\emptyset \}.
\end{equation*}
\end{definition}

Note, that  $V^\# \subset M_H$ since any default optimal path
outside of $M_H$ cannot intersect with $V_x$ as proven in the
previous lemma.

\begin{lemma}\label{lemma-Vreduction}
Consider any fixed $V_x\subset\Omega$ and assume that $f(\rho)>0$ is
increasing in $0\leq\rho<\rho_{\max}$, with $\rho_H=0$, then the
problem of computing the local potential $\tilde{\phi}(y)$ out of
\eqref{model-local-eikonal} further reduces to the following
problem on $V^\#$
\begin{equation*}
\begin{cases} \norm{\nabla_y \tilde{\phi}(y) }= \frac{1}{f(\rho(y,t))}+W(y) &\text{ in } V_x \\
\norm{\nabla_y \tilde{\phi}(y) }= \frac{1}{f(0)} &\text{ in }  V^\# \backslash V_x\\
\tilde{\phi} \equiv \phi_H &\text{ on } \partial V^\#
\end{cases}
\,.
\end{equation*}
\end{lemma}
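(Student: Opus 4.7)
The plan is to combine the optimal-path characterization of the eikonal potential (Fermat's principle, as recalled in \eqref{e:chareikonal}--\eqref{e:optimality2}) with a pointwise comparison of the local cost $c(y,t)$ and the constant default cost $1/f(0)$. The goal is to show that outside the characteristics' shadow $V^\#$ the local potential $\tilde\phi$ coincides with the default potential $\phi_H$. Once this identification is established, it yields the boundary condition $\tilde\phi\equiv\phi_H$ on $\partial V^\#$ and, together with \eqref{e:localeikonal} restricted to $V^\#$, gives the reduced problem.

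First I would establish the pointwise inequality $c(y,t)\geq 1/f(0)$ on all of $\Omega$. This is where the extra assumptions $\rho_H=0$ and $f$ increasing enter: on $V_x$ one has $c(y,t)=1/f(\rho(y,t))+W(y)\geq 1/f(0)$ since $\rho\geq 0$ and $W\geq 0$, and on $H_x$ one has $c(y,t)=1/f(\rho_H)=1/f(0)$ directly. Combined with the global optimality \eqref{e:optimality2}, this immediately gives $\tilde\phi(x,z,t)\geq \phi_H(z)$ for every $z\in\Omega$, because any admissible path has its local cost bounded below by its arclength divided by $f(0)$, whose infimum over paths joining $z$ to an exit is precisely $\phi_H(z)$ by definition of the default problem.

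Next I would prove the reverse inequality on $\Omega\setminus V^\#$ by producing a path that attains this lower bound. By the very definition of $V^\#$, for $z\notin V^\#$ the default path $\gamma_H^z$ avoids $\operatorname{int}V_x$, so $c(\gamma_H^z(s),t)\equiv 1/f(0)$ along the whole path; integrating then gives exactly $\phi_H(z)$. Hence $\gamma_H^z$ is admissible and optimal also for the local cost, forcing $\tilde\phi(x,z,t)=\phi_H(z)$ throughout $\Omega\setminus V^\#$. Passing to the limit on $\partial V^\#$ via the Lipschitz continuity of the value function (standard for strictly positive bounded costs, cf.\ \cite{BC}) delivers the boundary condition $\tilde\phi|_{\partial V^\#}=\phi_H$.

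The main obstacle is this second step: one must rule out the possibility that a detour through $V_x$ might shorten the effective travel time. This is exactly the content of the inequality $1/f(\rho(y,t))\geq 1/f(0)$, and is why the monotonicity of $f$ together with $\rho_H=0$ is required here, whereas Lemma \ref{lemma-Mreduction} needed only positivity of $f$. Once the boundary identification on $\partial V^\#$ is in place, the interior equations on $V_x$ and on $V^\#\setminus V_x$ are inherited verbatim from \eqref{e:localeikonal} (using $1/f(\rho_H)=1/f(0)$ on the hidden part), giving the reduced system in the statement.
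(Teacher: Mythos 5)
Your proposal is correct and follows essentially the same route as the paper's proof: the pointwise cost comparison $c(y,t)\geq 1/f(0)$ (from monotonicity of $f$, $\rho_H=0$, and $W\geq 0$) yields $\tilde\phi\geq\phi_H$ everywhere via global optimality, and for $z\notin V^\#$ the default path $\gamma_H^z$ serves as an admissible candidate of cost exactly $\phi_H(z)$, giving the reverse inequality and hence the identification that furnishes the boundary condition on $\partial V^\#$. Your additional remark on extending the equality to $\partial V^\#$ by continuity of the value function is a minor refinement the paper leaves implicit, not a difference in method.
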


\begin{proof} For any point $z$ whose default optimal path
$\gamma_H^z$ that does not intersect with $V$, the claim is that
$\tilde{\phi}(z)=\phi_H(z)$ due the monotonicity of the cost
function, i.e.,
$$
\frac{1}{f(\rho(y,t))}+W(y) \geq \frac{1}{f(0)}\,.
$$

To prove this, let us denote by $T_z^H$ the optimal time to get to
the exit for the default optimal path $\gamma_H^z$.

We first take $\gamma_{x,t}^z(s)$ as a candidate path in the
global optimality condition \eqref{e:optimality2} for the eikonal
equation with right hand side $c_H=\tfrac{1}{f(0)}$. Being
$\gamma_{x,t}^z(s)$ a path joining $z$ to a point in the exit and
$\gamma_H^z(s)$ the optimal one, we conclude
$$
\phi_H(z) = T_z^H c_H \leq T_z c_H \leq \int_0^{T_z}
c(\gamma_{x,t}^z(s),t) \,ds = \tilde\phi(z) \,.
$$
Now, we take $\gamma_H(z)$ as a candidate path in the global
optimality condition \eqref{e:optimality2} for the eikonal
equation with right hand side $c(y,t)$. It is an admissible path
as it connects $z$ to a point at the exit and the cost along its path
coincides with $c_H=c(\gamma_H^z(s),t)$ for all $s\in [0,T_Z^H]$
since the path does not cross $V$. Then, we get
$$
\tilde\phi(z) \leq \int_0^{T_z^H} c(\gamma_H^z(s),t) \,ds = T_z^H
c_H = \phi_H(z) \,,
$$
leading to the stated result.
\end{proof}

\begin{figure}
\begin{center}
\psxinput{
{
\begin{pspicture}(0,-1.62)(12.44,1.62)
\psline[linecolor=black, linewidth=0.04](1.62,1.6)(1.62,-1.6)
\psline[linecolor=black, linewidth=0.04](4.22,1.6)(4.22,-1.6)
\psline[linecolor=black, linewidth=0.04, arrowsize=0.05291666666666667cm 2.0,arrowlength=1.4,arrowinset=0.0]{<->}(1.82,-0.8)(4.02,-0.8)
\psline[linecolor=black, linewidth=0.04, linestyle=dotted, dotsep=0.10583334cm, arrowsize=0.05291666666666667cm 2.0,arrowlength=1.4,arrowinset=0.0]{<->}(1.82,-1.0)(6.22,-1.0)
\psline[linecolor=black, linewidth=0.04, linestyle=dashed, dash=0.17638889cm 0.10583334cm, arrowsize=0.05291666666666667cm 2.0,arrowlength=1.4,arrowinset=0.0]{<->}(1.82,-1.2)(10.42,-1.2)
\psline[linecolor=black, linewidth=0.04, linestyle=dashed, dash=0.17638889cm 0.10583334cm](10.62,1.6)(10.62,-1.6)
\psline[linecolor=black, linewidth=0.04, arrowsize=0.05291666666666667cm 2.0,arrowlength=1.4,arrowinset=0.0]{->}(11.02,1.2)(12.02,1.2)
\psline[linecolor=black, linewidth=0.04, arrowsize=0.05291666666666667cm 2.0,arrowlength=1.4,arrowinset=0.0]{->}(11.02,0.4)(12.02,0.4)
\psline[linecolor=black, linewidth=0.04, arrowsize=0.05291666666666667cm 2.0,arrowlength=1.4,arrowinset=0.0]{->}(11.02,-1.2)(12.02,-1.2)
\psline[linecolor=black, linewidth=0.04, arrowsize=0.05291666666666667cm 2.0,arrowlength=1.4,arrowinset=0.0]{->}(1.22,1.2)(0.22,1.2)
\psline[linecolor=black, linewidth=0.04, arrowsize=0.05291666666666667cm 2.0,arrowlength=1.4,arrowinset=0.0]{->}(1.22,0.4)(0.22,0.4)
\psline[linecolor=black, linewidth=0.04, arrowsize=0.05291666666666667cm 2.0,arrowlength=1.4,arrowinset=0.0]{->}(1.22,-0.4)(0.22,-0.4)
\psline[linecolor=black, linewidth=0.04, arrowsize=0.05291666666666667cm 2.0,arrowlength=1.4,arrowinset=0.0]{->}(1.22,-1.2)(0.22,-1.2)
\psline[linecolor=black, linewidth=0.04, arrowsize=0.05291666666666667cm 2.0,arrowlength=1.4,arrowinset=0.0]{->}(11.02,-0.4)(12.02,-0.4)
\rput[bl](2.82,-0.6){$V$}
\rput[bl](5.02,-0.85){$V^\#$}
\rput[bl](7.42,-1.1){$M_{H}$}
\psline[linecolor=black, linewidth=0.04, linestyle=dotted, dotsep=0.10583334cm](6.42,1.6)(6.42,-1.6)
\psline[linecolor=black, linewidth=0.04](0.02,1.6)(0.02,-1.6)(12.42,-1.6)
\psline[linecolor=black, linewidth=0.04](12.42,-1.6)(12.42,1.6)(0.02,1.6)
\psline[linecolor=black, linewidth=0.04, linestyle=dotted, dotsep=0.10583334cm, arrowsize=0.05291666666666667cm 2.0,arrowlength=1.4,arrowinset=0.0]{->}(6.82,1.2)(10.42,1.2)
\psline[linecolor=black, linewidth=0.04, linestyle=dotted, dotsep=0.10583334cm, arrowsize=0.05291666666666667cm 2.0,arrowlength=1.4,arrowinset=0.0]{->}(6.82,0.4)(10.42,0.4)
\psline[linecolor=black, linewidth=0.04, linestyle=dotted, dotsep=0.10583334cm, arrowsize=0.05291666666666667cm 2.0,arrowlength=1.4,arrowinset=0.0]{->}(6.82,-0.4)(10.42,-0.4)
\end{pspicture}
}}{1} \caption{Illustration of the
domains $V,M_H$ and $V^{\#}$  for computation of the visibility
area potential $\tilde{\phi}$ for the case of a corridor with two
opposing exits: The problem on $\Omega$ generally reduces to a
HJ-equation on $M_H$, as by construction $\tilde{\phi}$ coincides
with $\phi_H$ outside of $M_H$ (Lemma \eqref{lemma-Mreduction},
$-\nabla \phi_H$ solid arrows). If $\rho_H=0$, any default optimal
path of $\phi_H$ that does not intersect $V$ remains optimal, as
indicated by dotted arrows, and the problem reduces to $V^\#$
(Lemma \ref{lemma-Vreduction}).} \label{fig-MVreduction}
\end{center}
\end{figure}
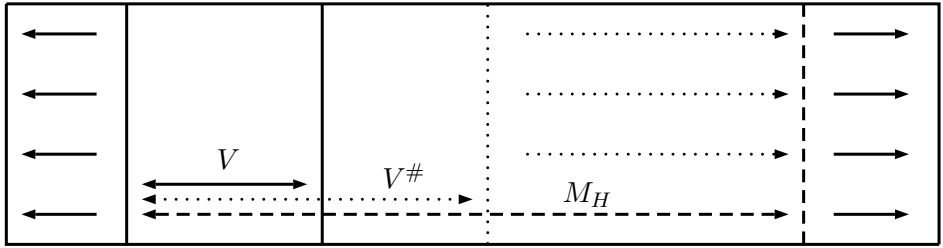
We illustrate Lemmata \ref{lemma-Mreduction} and \ref{lemma-Vreduction} in Figure \ref{fig-MVreduction}.
It can be seen, that the reduction of the computational domain
from $M_H$ to $V^{\#}$ can be significant, as the size of $M_H$
depends on the closeness of $V$ to the nearest exit, not on the
size of $V$. For the exemplary geometry of Figure \ref{fig-MVreduction}, the boundary of $V^{\#}$ coincides with
the sonic points of $\phi_H$, but this is not true in general.
Furthermore, it is easy to see why the computational domain cannot be reduced further. Suppose that $\rho(\cdot,t)$ is spatially homogeneous, then
$-\nabla\phi$ in
$V^\#\backslash V$
points to the left exit as in the eikonal case. On the other side, one can choose
a situation with a large density at the left boundary of
$V$ that leads to right-pointing $-\nabla\phi$ in
$V^\#\backslash V$.

\section{Computational methods}\label{s:computmeth}

In this section we present a microscopic and a macroscopic numerical solver to simulate the classic and the local
version of the Hughes model. The proposed methods have been implemented on regular and triangular meshes in 2D to allow for flexible
discretizations of polygonal domains with one or several obstacles.

For the macroscopic system \eqref{e:localmodel} we use the
following explicit iterative algorithm:
\begin{algorithm}
\caption{Macroscopic version of the localised model}
\label{a:macrosim}
Initialisation: \begin{itemize}
\item  A discretisation $\hat\Omega=( \mathcal{V},  \mathcal{E}, \mathcal{T})$ of $\Omega$ consisting of vertices, edges and cells.
\item An initial density $\hat\rho_0$ given on $\mathcal{T}$, such that $\int_{T} \rho(x,0)\dd x = \hat\rho_0(T) \,\, \forall \, T \in \mathcal{T}$.
\item A list of exits, a list of boundary edges per exit and $|\mathcal{V}|$ subsets of $\mathcal{V}$ containing the vision cones defined in terms of vertices.
\end{itemize}
\begin{enumerate}
\item Compute the cost potential $\hat\phi_k$ for all exits out of the current density $\hat\rho$ by solving \eqref{e:localeikonal} along the vertices for every $v\in \mathcal{V}$.
\item Determine the cell values of $\hat\phi_k$ and $\nabla\hat\phi_k$ by an averaging / finite difference approximation of the values at neighbouring vertices, e.g. $\hat\phi_k(T)=\frac{1}{|\{v\in\partial T\}|}\sum_{v\in\partial T} \hat\phi_k(u)$, and obtain $\hat u(T)$ here from. 
\item Compute a numerical convolution of $\hat u$ with $\mathcal{K}$, which gives $\hat\varphi$ on the cells.
\item Update the density with a cell-based Finite Volume Method using the velocity field $-f(\hat\rho)\mathcal{P}[\hat\varphi]$ and a suitably chosen time step.
\end{enumerate}
\label{algo:macro}
\end{algorithm}

The discretisation is either a regular grid or an unstructured
regular triangular mesh to allow more complex geometries. For
solving the eikonal equations, one can chose between Fast Sweeping
Methods\cite{zhao05,QZZ2007} and Fast Marching
Methods\cite{kimmel1998computing,SethianRev1999}. The former is
based on a Gauss-Seidel iteration, which updates the solution by
passing through the computational domain in alternate pre-defined
sweeping directions. A rectangular grid provides a natural
ordering of all grid points. This ordering does not exist on an
unstructured grid and is replaced by a general ordering strategy
by introducing reference points, which is done once.  Then the
solution at each node is consecutively updated by running through
the ordered lists. Marching methods update vertices in a monotone
increasing order, where in every iteration a list of candidate
values is available by finite difference approximation from
previously approved values. The smallest value all of candidate
values is then promoted and assigned to its vertex.

As a Finite Volume Method we use the first-order monotone FORCE
scheme\cite{toro2009force,Toro2010}. Some postprocessing between
the steps of Algorithm \ref{algo:macro} is required:
outward-pointing components of $\nabla\hat\phi_k$ are removed
along the boundary, suitable values of $\nabla\hat\phi_k$ are
ensured at corners of $\Omega$, and the max outflow condition
\eqref{e:maxoutflow} is enforced at cells neighbouring exit edges.

The analogous algorithm used for the numerical simulation of the
microscopic model is:
\begin{algorithm}
\caption{Microscopic version of the localised model}
\label{a:microsim}
Let us consider a system of $N$ particles, which are initially located at positions $X^j(0) = X^j_0$. In every time step
$t^i = i \Delta t$ we update the particle position as follows:
\begin{enumerate}
\item Determine the empirical density at time $t^i$:
\begin{align*}
\rho^N_g(x,t^i)  = \frac{1}{N} \sum_{j=1}^N g(x-X^j(t^i)),
\end{align*}
where $g$ denotes a Gaussian.
\item Solve the eikonal equation to determine the weighted distance to each exit $\phi^k = \phi^k(x,t^i)$, $k=1, \ldots M$:
\begin{subequations}\label{e:microeikonal}
\begin{align}
 \norm{\nabla \phi^k(x,y,t^i)} &=
\begin{cases}
\frac{1}{ f(\rho^N_g(y,t^i))} +W(y) &\text{ if } y \in V_x\\
\frac{1}{f(\rho_H)} &\text{ otherwise.}
\end{cases}\\
\phi^k(x,t^i) &= 0
\end{align}
\end{subequations}
\item Update the position of each particle $X^j$ via:
\begin{align}
\dot{X^j}(t^i)  = -f^2(\rho^N_g(x,t^i) \cdot \nabla \varphi(x,t^i))),\label{e:microupdate}
\end{align}
where $\varphi(x,t^i)$ is determined by \eqref{e:localinteraction}.
\end{enumerate}
\end{algorithm}

\section{Results}\label{s:numexp}

In this section we illustrate the dynamics of the localised model
for crowd dynamics with examples in one and two dimensions. In all
simulations we consider an evacuation scenario of a corridor,
where a given initial distribution of people tries to leave the
rectangular domain through either one of the two exits as fast as
possible. We compare the evacuation time, i.e. the time at which
all individuals have left the domain, with respect to different
parameters, e.g. vision cones. In the case of a global vision cone
we obtain Hughes' type dynamics. As a flux law, we chose the LWR
function \begin{equation}
f(\rho)=\rho(1-\rho)
\label{fluxlaw},
\end{equation}
setting $\rho_{\max}=1$ throughout this section.

\subsection{1D corridor - macroscopic model} \label{s:1dcorrmac}
In our first example the domain $\Omega$ corresponds to the unit
interval $\Omega=[0,1]$ with two exits located at either end, i.e.
at $x=0$ and $x=1$. We consider an evacuation scenario in which
two groups, one of them being densly packed, want to leave through
either one of the exits:
\begin{equation}
\rho_0(x) = \begin{cases}
0.85 & 0\geq x \geq 0.3 \\
0 & 0.3< x < 0.6 \\
0.25 & 0.6\geq x \geq 1
\end{cases},
\label{e:initsymcorridor}
\end{equation}
and we set the width of the vision cone to $L=0.75$. The resulting
dynamics are illustrated at 4 time steps in Fig.
\ref{fig-1Dturnaround}. Within the left block, some pedestrians
decide to walk towards the right exit, as they are aware of the
high density on their left and account for a higher walking cost
compared to the relatively empty right hand side. After the
separation the right-moving part evolves as a rarefaction wave, as
known from the LWR model. As the distance between the wave and the
left-moving shock grows, the effects of the local vision cone
become apparent. At some point pedestrians moving to the right
do not see the high density at the left exit anymore and start to
turn around. Therefore the rarefaction wave splits again - one
part continues while the other one turns around and moves back to
the left exit. The turn-around occurs in several stages:
\begin{enumerate}
\item A new sonic point arises, where pedestrians are undecided between both exits. The walking direction is unchanged as the local consensus process \eqref{e:localinteraction} prevents an immediate switching.
\item When a critical mass of density and conviction opting for walking to the left, the velocity after consensus switches continuously and passing through zero. This creates a temporary collision point, as there a still pedestrians to the left of the sonic point which walk towards the right.
\item The density at the collision point increases, which causes pedestrians to the left of the collision point to turn around too, as a higher density is in their way, as it can bee seen in Fig. \ref{fig-1Dturnaround}(c).
\item Finally, all pedestrians to the left of  the initial sonic point have turned and walk towards the left ( Fig. \ref{fig-1Dturnaround}(d)).
\end{enumerate}

\begin{figure}
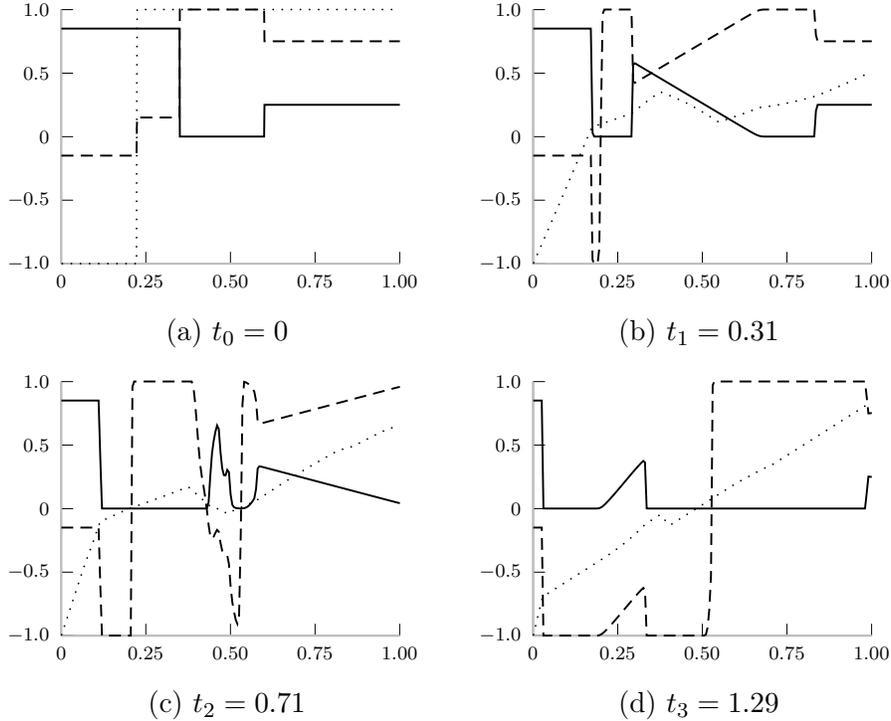

\centering
\begin{tabular}{c c}
\psxinput{\input{draw_num_1d_ex1_a}}{.9} &
\psxinput{\psset{xunit=5cm,yunit=1.8779cm}
\begin{pspicture*}(-0.16774,-1.3334)(1.1226,1.2433)
\psaxes[labelFontSize=\scriptstyle,Ox=0,Oy=-1.0,Dx=0.25,Dy=0.5,ticksize=5pt,linecolor=lightgray]{-}(0,-1.0)(0,-1)(1,1)
\savedata{\mydata}[{
{0.0005,-1}{0.0055,-0.96978}{0.0105,-0.9395}{0.0155,-0.90916}{0.0205,-0.87876}
{0.0255,-0.84829}{0.0305,-0.81778}{0.0355,-0.78721}{0.0405,-0.75658}{0.0455,-0.7259}{0.0505,-0.69518}
{0.0555,-0.6644}{0.0605,-0.63357}{0.0655,-0.6027}{0.0705,-0.57178}{0.0755,-0.54082}{0.0805,-0.50981}
{0.0855,-0.47876}{0.0905,-0.44766}{0.0955,-0.41653}{0.1005,-0.38536}{0.1055,-0.35415}{0.1105,-0.3229}
{0.1155,-0.29161}{0.1205,-0.26029}{0.1255,-0.22893}{0.1305,-0.19753}{0.1355,-0.1661}{0.1405,-0.13464}
{0.1455,-0.10315}{0.1505,-0.071619}{0.1555,-0.040061}{0.1605,-0.0084714}{0.1655,0.023148}{0.1705,0.054798}
{0.1755,0.07708}{0.1805,0.081408}{0.1855,0.085748}{0.1905,0.090115}{0.1955,0.09451}{0.2005,0.098931}
{0.2055,0.10338}{0.2105,0.10785}{0.2155,0.11235}{0.2205,0.11687}{0.2255,0.12141}{0.2305,0.12598}
{0.2355,0.13058}{0.2405,0.13519}{0.2455,0.13983}{0.2505,0.14449}{0.2555,0.14917}{0.2605,0.15387}
{0.2655,0.1586}{0.2705,0.16334}{0.2755,0.1681}{0.2805,0.17288}{0.2855,0.17768}{0.2905,0.18249}
{0.2955,0.18894}{0.3005,0.20041}{0.3055,0.21173}{0.3105,0.22286}{0.3155,0.23378}{0.3205,0.24452}
{0.3255,0.25507}{0.3305,0.26544}{0.3355,0.27564}{0.3405,0.28567}{0.3455,0.29554}{0.3505,0.30525}
{0.3555,0.31481}{0.3605,0.32423}{0.3655,0.3335}{0.3705,0.34263}{0.3755,0.35163}{0.3805,0.3468}
{0.3855,0.34185}{0.3905,0.33677}{0.3955,0.33157}{0.4005,0.32626}{0.4055,0.32083}{0.4105,0.3153}
{0.4155,0.30965}{0.4205,0.3039}{0.4255,0.29805}{0.4305,0.2921}{0.4355,0.28605}{0.4405,0.27991}
{0.4455,0.27368}{0.4505,0.26736}{0.4555,0.26094}{0.4605,0.2542}{0.4655,0.24684}{0.4705,0.23937}
{0.4755,0.23182}{0.4805,0.22419}{0.4855,0.21649}{0.4905,0.20871}{0.4955,0.20086}{0.5005,0.19294}
{0.5055,0.18495}{0.5105,0.17689}{0.5155,0.16876}{0.5205,0.16057}{0.5255,0.15231}{0.5305,0.14399}
{0.5355,0.1356}{0.5405,0.12716}{0.5455,0.11866}{0.5505,0.11343}{0.5555,0.1185}{0.5605,0.12352}
{0.5655,0.12849}{0.5705,0.1334}{0.5755,0.13826}{0.5805,0.14307}{0.5855,0.14783}{0.5905,0.15254}
{0.5955,0.15719}{0.6005,0.1618}{0.6055,0.16637}{0.6105,0.17088}{0.6155,0.17535}{0.6205,0.17978}
{0.6255,0.18432}{0.6305,0.18947}{0.6355,0.19457}{0.6405,0.19963}{0.6455,0.20466}{0.6505,0.20964}
{0.6555,0.21459}{0.6605,0.2195}{0.6655,0.22439}{0.6705,0.22874}{0.6755,0.23029}{0.6805,0.23187}
{0.6855,0.23355}{0.6905,0.23533}{0.6955,0.2372}{0.7005,0.23917}{0.7055,0.24123}{0.7105,0.24337}
{0.7155,0.2456}{0.7205,0.24791}{0.7255,0.25029}{0.7305,0.25276}{0.7355,0.25529}{0.7405,0.2579}
{0.7455,0.26058}{0.7505,0.26333}{0.7555,0.26614}{0.7605,0.26901}{0.7655,0.27195}{0.7705,0.27494}
{0.7755,0.278}{0.7805,0.28111}{0.7855,0.28427}{0.7905,0.28749}{0.7955,0.29076}{0.8005,0.29409}
{0.8055,0.29746}{0.8105,0.30088}{0.8155,0.30435}{0.8205,0.30787}{0.8255,0.31143}{0.8305,0.31503}
{0.8355,0.31905}{0.8405,0.32427}{0.8455,0.32961}{0.8505,0.33499}{0.8555,0.34041}{0.8605,0.34587}
{0.8655,0.35137}{0.8705,0.3569}{0.8755,0.36247}{0.8805,0.36807}{0.8855,0.37371}{0.8905,0.37938}
{0.8955,0.38509}{0.9005,0.39083}{0.9055,0.39659}{0.9105,0.40239}{0.9155,0.40822}{0.9205,0.41408}
{0.9255,0.41997}{0.9305,0.42589}{0.9355,0.43184}{0.9405,0.43781}{0.9455,0.44381}{0.9505,0.44984}
{0.9555,0.45589}{0.9605,0.46197}{0.9655,0.46807}{0.9705,0.4742}{0.9755,0.48036}{0.9805,0.48653}
{0.9855,0.49273}{0.9905,0.49895}{0.9955,0.5052},{1.0005,0.51021}
}]
\dataplot[plotstyle=line,linestyle=dotted, linecolor=black]{\mydata}
\savedata{\mydata}[{
{0.0005,-0.15}{0.0055,-0.15}{0.0105,-0.15}{0.0155,-0.15}{0.0205,-0.15}
{0.0255,-0.15}{0.0305,-0.15}{0.0355,-0.15}{0.0405,-0.15}{0.0455,-0.15}{0.0505,-0.15}
{0.0555,-0.15}{0.0605,-0.15}{0.0655,-0.15}{0.0705,-0.15}{0.0755,-0.15}{0.0805,-0.15}
{0.0855,-0.15}{0.0905,-0.15}{0.0955,-0.15}{0.1005,-0.15}{0.1055,-0.15}{0.1105,-0.15}
{0.1155,-0.15}{0.1205,-0.15}{0.1255,-0.15}{0.1305,-0.15}{0.1355,-0.15}{0.1405,-0.15}
{0.1455,-0.15}{0.1505,-0.15}{0.1555,-0.15}{0.1605,-0.15}{0.1655,-0.15}{0.1705,-0.15001}
{0.1755,-0.97041}{0.1805,-1}{0.1855,-1}{0.1905,-1}{0.1955,-0.88051}{0.2005,0.192}
{0.2055,0.9663}{0.2105,1}{0.2155,1}{0.2205,1}{0.2255,1}{0.2305,1}
{0.2355,1}{0.2405,1}{0.2455,1}{0.2505,1}{0.2555,1}{0.2605,1}
{0.2655,1}{0.2705,1}{0.2755,1}{0.2805,1}{0.2855,1}{0.2905,0.99994}
{0.2955,0.43601}{0.3005,0.42276}{0.3055,0.43065}{0.3105,0.43854}{0.3155,0.44644}{0.3205,0.45433}
{0.3255,0.46222}{0.3305,0.47011}{0.3355,0.478}{0.3405,0.48589}{0.3455,0.49378}{0.3505,0.50168}
{0.3555,0.50957}{0.3605,0.51746}{0.3655,0.52535}{0.3705,0.53325}{0.3755,0.54114}{0.3805,0.54903}
{0.3855,0.55692}{0.3905,0.56482}{0.3955,0.57271}{0.4005,0.5806}{0.4055,0.5885}{0.4105,0.59639}
{0.4155,0.60429}{0.4205,0.61218}{0.4255,0.62008}{0.4305,0.62797}{0.4355,0.63587}{0.4405,0.64377}
{0.4455,0.65166}{0.4505,0.65956}{0.4555,0.66746}{0.4605,0.67535}{0.4655,0.68325}{0.4705,0.69115}
{0.4755,0.69905}{0.4805,0.70695}{0.4855,0.71485}{0.4905,0.72275}{0.4955,0.73065}{0.5005,0.73855}
{0.5055,0.74645}{0.5105,0.75436}{0.5155,0.76226}{0.5205,0.77016}{0.5255,0.77806}{0.5305,0.78597}
{0.5355,0.79387}{0.5405,0.80177}{0.5455,0.80968}{0.5505,0.81758}{0.5555,0.82548}{0.5605,0.83339}
{0.5655,0.84129}{0.5705,0.84919}{0.5755,0.85709}{0.5805,0.86499}{0.5855,0.87288}{0.5905,0.88078}
{0.5955,0.88866}{0.6005,0.89654}{0.6055,0.90442}{0.6105,0.91228}{0.6155,0.92013}{0.6205,0.92796}
{0.6255,0.93576}{0.6305,0.94352}{0.6355,0.95123}{0.6405,0.95885}{0.6455,0.96634}{0.6505,0.97363}
{0.6555,0.98058}{0.6605,0.98697}{0.6655,0.99244}{0.6705,0.9965}{0.6755,0.99885}{0.6805,0.99977}
{0.6855,0.99997}{0.6905,1}{0.6955,1}{0.7005,1}{0.7055,1}{0.7105,1}
{0.7155,1}{0.7205,1}{0.7255,1}{0.7305,1}{0.7355,1}{0.7405,1}
{0.7455,1}{0.7505,1}{0.7555,1}{0.7605,1}{0.7655,1}{0.7705,1}
{0.7755,1}{0.7805,1}{0.7855,1}{0.7905,1}{0.7955,1}{0.8005,1}
{0.8055,1}{0.8105,1}{0.8155,1}{0.8205,1}{0.8255,1}{0.8305,0.9992}
{0.8355,0.79859}{0.8405,0.7502}{0.8455,0.75}{0.8505,0.75}{0.8555,0.75}{0.8605,0.75}
{0.8655,0.75}{0.8705,0.75}{0.8755,0.75}{0.8805,0.75}{0.8855,0.75}{0.8905,0.75}
{0.8955,0.75}{0.9005,0.75}{0.9055,0.75}{0.9105,0.75}{0.9155,0.75}{0.9205,0.75}
{0.9255,0.75}{0.9305,0.75}{0.9355,0.75}{0.9405,0.75}{0.9455,0.75}{0.9505,0.75}
{0.9555,0.75}{0.9605,0.75}{0.9655,0.75}{0.9705,0.75}{0.9755,0.75}{0.9805,0.75}
{0.9855,0.75}{0.9905,0.75}{0.9955,0.75},{1.0005,0.75}
}]
\dataplot[plotstyle=line,linestyle=dashed,linecolor=black]{\mydata}
\savedata{\mydata}[{
{0.0005,0.85}{0.0055,0.85}{0.0105,0.85}{0.0155,0.85}{0.0205,0.85}
{0.0255,0.85}{0.0305,0.85}{0.0355,0.85}{0.0405,0.85}{0.0455,0.85}{0.0505,0.85}
{0.0555,0.85}{0.0605,0.85}{0.0655,0.85}{0.0705,0.85}{0.0755,0.85}{0.0805,0.85}
{0.0855,0.85}{0.0905,0.85}{0.0955,0.85}{0.1005,0.85}{0.1055,0.85}{0.1105,0.85}
{0.1155,0.85}{0.1205,0.85}{0.1255,0.85}{0.1305,0.85}{0.1355,0.85}{0.1405,0.85}
{0.1455,0.85}{0.1505,0.85}{0.1555,0.85}{0.1605,0.85}{0.1655,0.85}{0.1705,0.84999}
{0.1755,0.029594}{0.1805,4.0819e-11}{0.1855,4.4336e-20}{0.1905,4.8317e-29}{0.1955,8.9481e-33}{0.2005,4.9231e-33}
{0.2055,1.088e-32}{0.2105,4.186e-32}{0.2155,1.6494e-31}{0.2205,6.4979e-31}{0.2255,2.5594e-30}{0.2305,1.0077e-29}
{0.2355,3.9657e-29}{0.2405,1.5596e-28}{0.2455,6.1278e-28}{0.2505,2.405e-27}{0.2555,9.4273e-27}{0.2605,3.6893e-26}
{0.2655,1.4409e-25}{0.2705,5.6143e-25}{0.2755,2.37e-24}{0.2805,1.4093e-18}{0.2855,9.3989e-12}{0.2905,5.625e-05}
{0.2955,0.56399}{0.3005,0.57724}{0.3055,0.56935}{0.3105,0.56146}{0.3155,0.55356}{0.3205,0.54567}
{0.3255,0.53778}{0.3305,0.52989}{0.3355,0.522}{0.3405,0.51411}{0.3455,0.50622}{0.3505,0.49832}
{0.3555,0.49043}{0.3605,0.48254}{0.3655,0.47465}{0.3705,0.46675}{0.3755,0.45886}{0.3805,0.45097}
{0.3855,0.44308}{0.3905,0.43518}{0.3955,0.42729}{0.4005,0.4194}{0.4055,0.4115}{0.4105,0.40361}
{0.4155,0.39571}{0.4205,0.38782}{0.4255,0.37992}{0.4305,0.37203}{0.4355,0.36413}{0.4405,0.35623}
{0.4455,0.34834}{0.4505,0.34044}{0.4555,0.33254}{0.4605,0.32465}{0.4655,0.31675}{0.4705,0.30885}
{0.4755,0.30095}{0.4805,0.29305}{0.4855,0.28515}{0.4905,0.27725}{0.4955,0.26935}{0.5005,0.26145}
{0.5055,0.25355}{0.5105,0.24564}{0.5155,0.23774}{0.5205,0.22984}{0.5255,0.22194}{0.5305,0.21403}
{0.5355,0.20613}{0.5405,0.19823}{0.5455,0.19032}{0.5505,0.18242}{0.5555,0.17452}{0.5605,0.16661}
{0.5655,0.15871}{0.5705,0.15081}{0.5755,0.14291}{0.5805,0.13501}{0.5855,0.12712}{0.5905,0.11922}
{0.5955,0.11134}{0.6005,0.10346}{0.6055,0.095582}{0.6105,0.087719}{0.6155,0.079871}{0.6205,0.072042}
{0.6255,0.064241}{0.6305,0.056478}{0.6355,0.048772}{0.6405,0.041151}{0.6455,0.033659}{0.6505,0.026371}
{0.6555,0.01942}{0.6605,0.01303}{0.6655,0.0075639}{0.6705,0.0034972}{0.6755,0.0011459}{0.6805,0.00023335}
{0.6855,2.6345e-05}{0.6905,1.4998e-06}{0.6955,3.8927e-08}{0.7005,4.0364e-10}{0.7055,1.3837e-12}{0.7105,1.167e-15}
{0.7155,1.4296e-19}{0.7205,7.3733e-25}{0.7255,3.2669e-82}{0.7305,2.1973e-78}{0.7355,1.3683e-74}{0.7405,8.009e-71}
{0.7455,4.4781e-67}{0.7505,2.4284e-63}{0.7555,1.2932e-59}{0.7605,6.822e-56}{0.7655,3.5838e-52}{0.7705,1.8795e-48}
{0.7755,9.852e-45}{0.7805,5.1633e-41}{0.7855,2.7059e-37}{0.7905,1.418e-33}{0.7955,7.4311e-30}{0.8005,3.8943e-26}
{0.8055,2.0408e-22}{0.8105,1.0695e-18}{0.8155,5.6046e-15}{0.8205,2.9371e-11}{0.8255,1.5392e-07}{0.8305,0.00079913}
{0.8355,0.20141}{0.8405,0.2498}{0.8455,0.25}{0.8505,0.25}{0.8555,0.25}{0.8605,0.25}
{0.8655,0.25}{0.8705,0.25}{0.8755,0.25}{0.8805,0.25}{0.8855,0.25}{0.8905,0.25}
{0.8955,0.25}{0.9005,0.25}{0.9055,0.25}{0.9105,0.25}{0.9155,0.25}{0.9205,0.25}
{0.9255,0.25}{0.9305,0.25}{0.9355,0.25}{0.9405,0.25}{0.9455,0.25}{0.9505,0.25}
{0.9555,0.25}{0.9605,0.25}{0.9655,0.25}{0.9705,0.25}{0.9755,0.25}{0.9805,0.25}
{0.9855,0.25}{0.9905,0.25}{0.9955,0.25},{1.0005,0.25}
}]
\dataplot[plotstyle=line,linecolor=black]{\mydata}

\end{pspicture*}}{.9} \\
(a) $t_0=0 $&  (b) $t_1=0.31 $\\
\psxinput{\input{draw_num_1d_ex1_c}}{.9} &
\psxinput{\input{draw_num_1d_ex1_d}}{.9} \\
(c) $t_2=0.71 $&  (d) $t_3=1.29 $
\end{tabular}
\caption{Exemplary evolution of the 1D model showcasing a
turnaround behavior due to localised perception of information
[density $\rho$ solid ($-$), speed $v=-f(\rho)\mathcal{P}[\nabla
\varphi]$ dashed ($- -$) and directional conviction
$\phi_R-\phi_L$ dotted ($\cdot \cdot$)]. (a) Piece-wise constant
initial density. Part of the left crowd initially decides to move
right in order to avoid the high-density jam. (b) The separated
block moves to the right in a rarefaction-wave manner. (c) The
wave is again separated as the high-density jam gets out of sight
for centrally located pedestrians, who hence prefer the left exit
and turn. (d) The turnaround is complete and remaining pedestrians
will exit on the left.} \label{fig-1Dturnaround}
\end{figure}
This new behavioural pattern is entirely consistent with the idea
of constant re-evaluation of the optimal path based on restricted
information and cannot be observed in the original Hughes' model.
We note that without the smoothening properties of the model
around points of equal costs one obtains strong oscillations in
the turning behavior, which causes severe numerical problems. The
exact parameters of the simulation can be found in Appendix \ref{a:1d}
\subsection{2D corridor - microscopic model}
\label{s:num2dmicro}
\begin{figure}
\begin{center}
\subfigure[Time $t=0$]{\includegraphics[width=0.325 \textwidth]{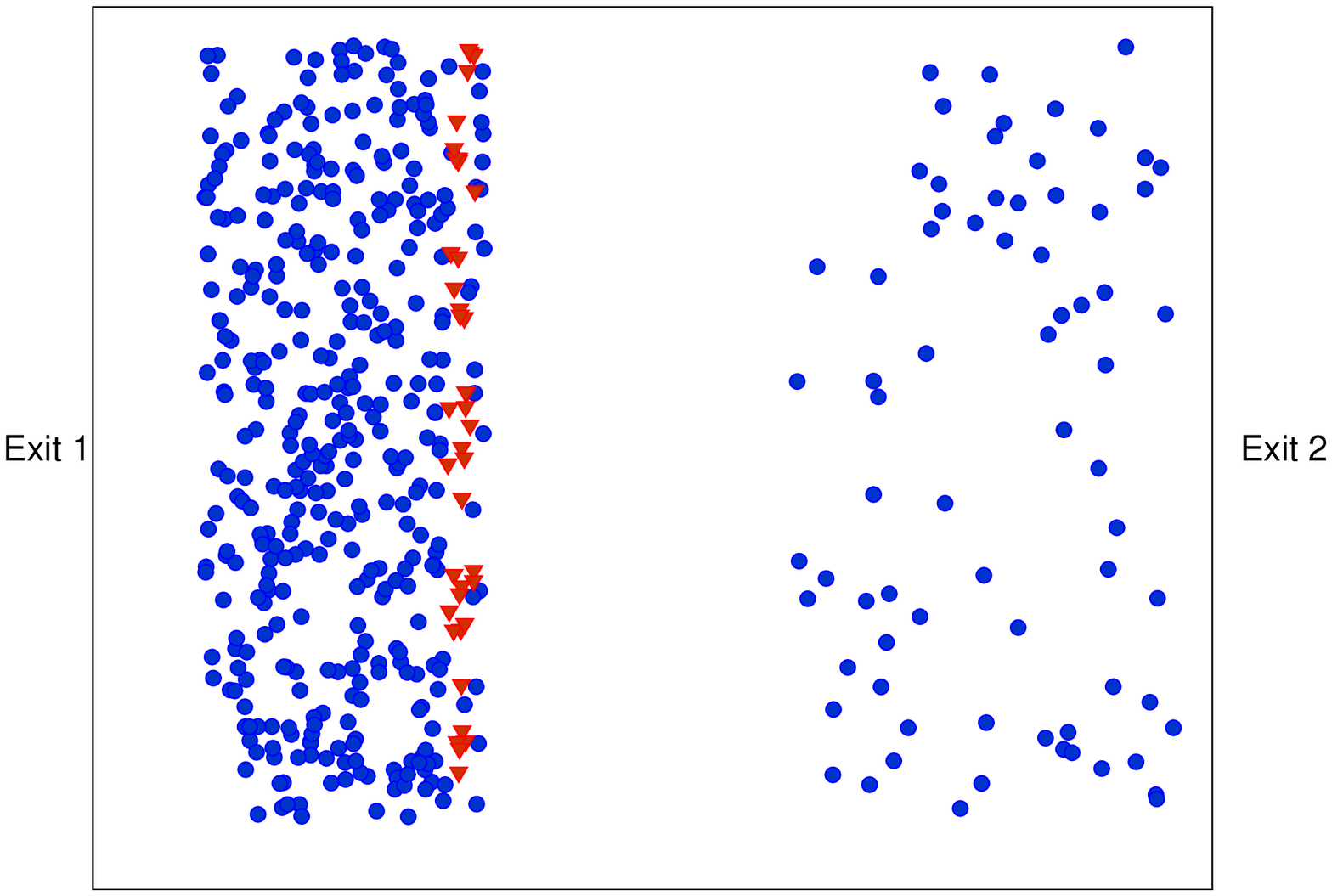}}
\subfigure[Time $t=0.2$]{\includegraphics[width=0.325 \textwidth]{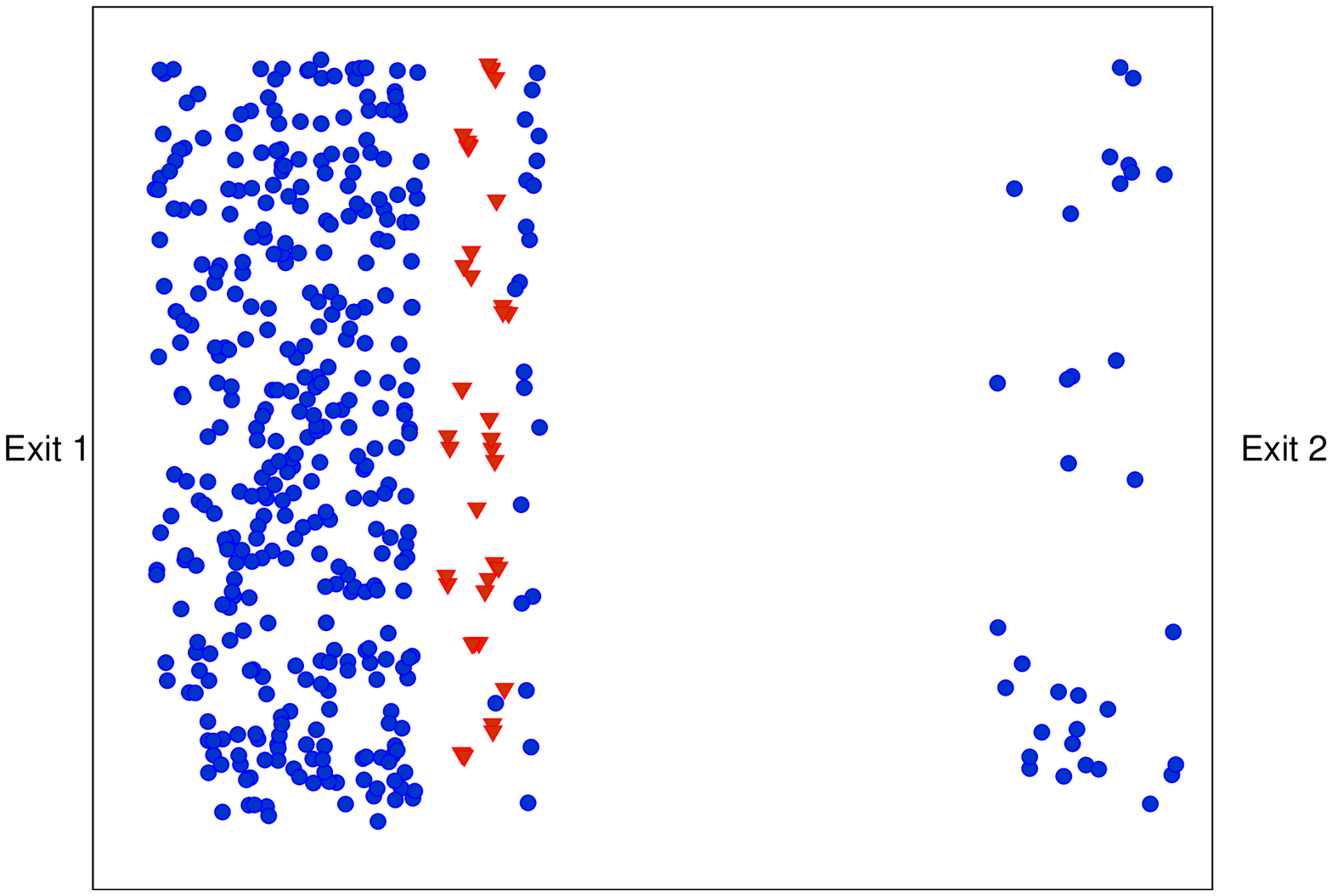}}
\subfigure[Time $t=0.4$]{\includegraphics[width=0.325 \textwidth]{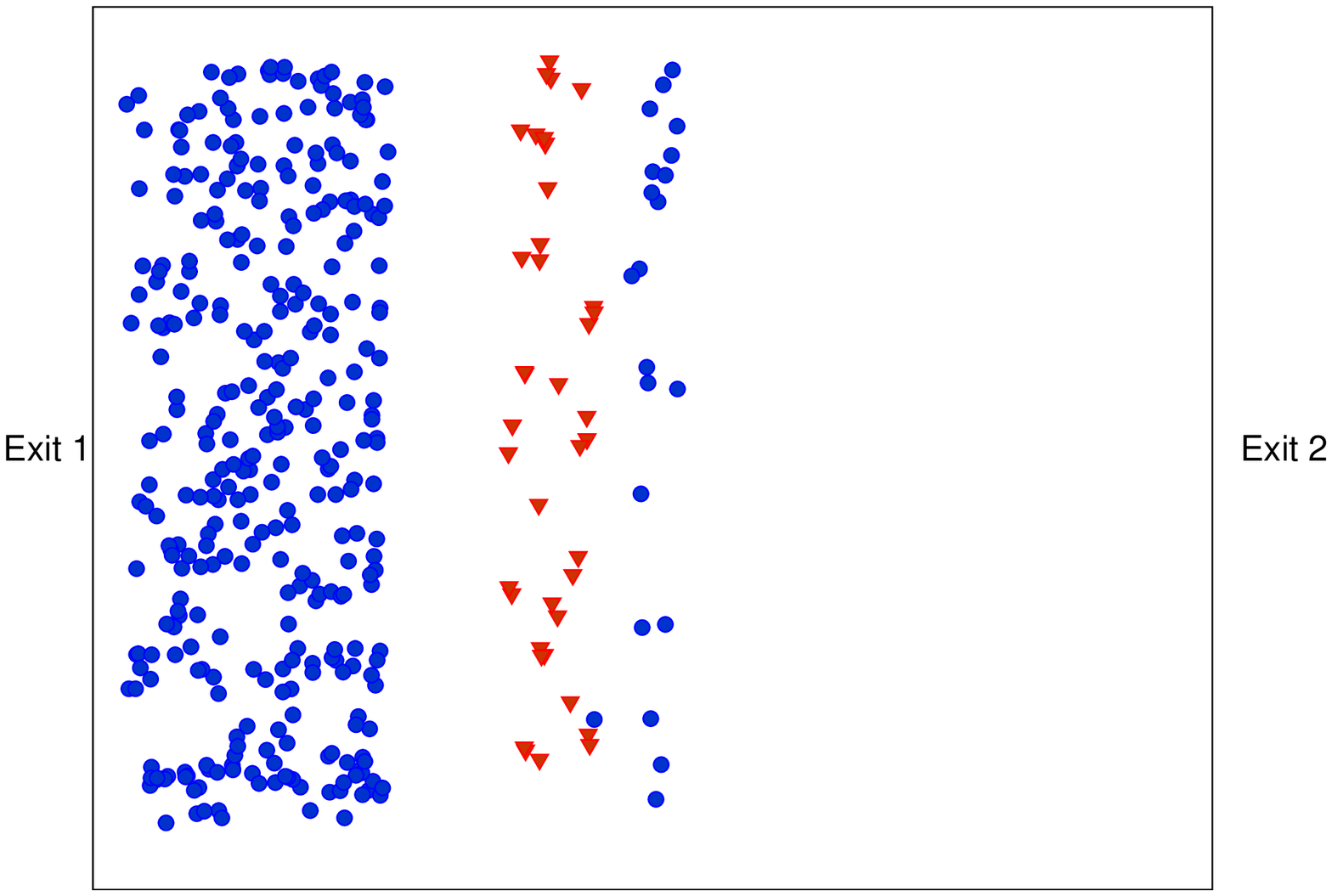}}\\
\subfigure[Time $t=0.6$]{\includegraphics[width=0.325 \textwidth]{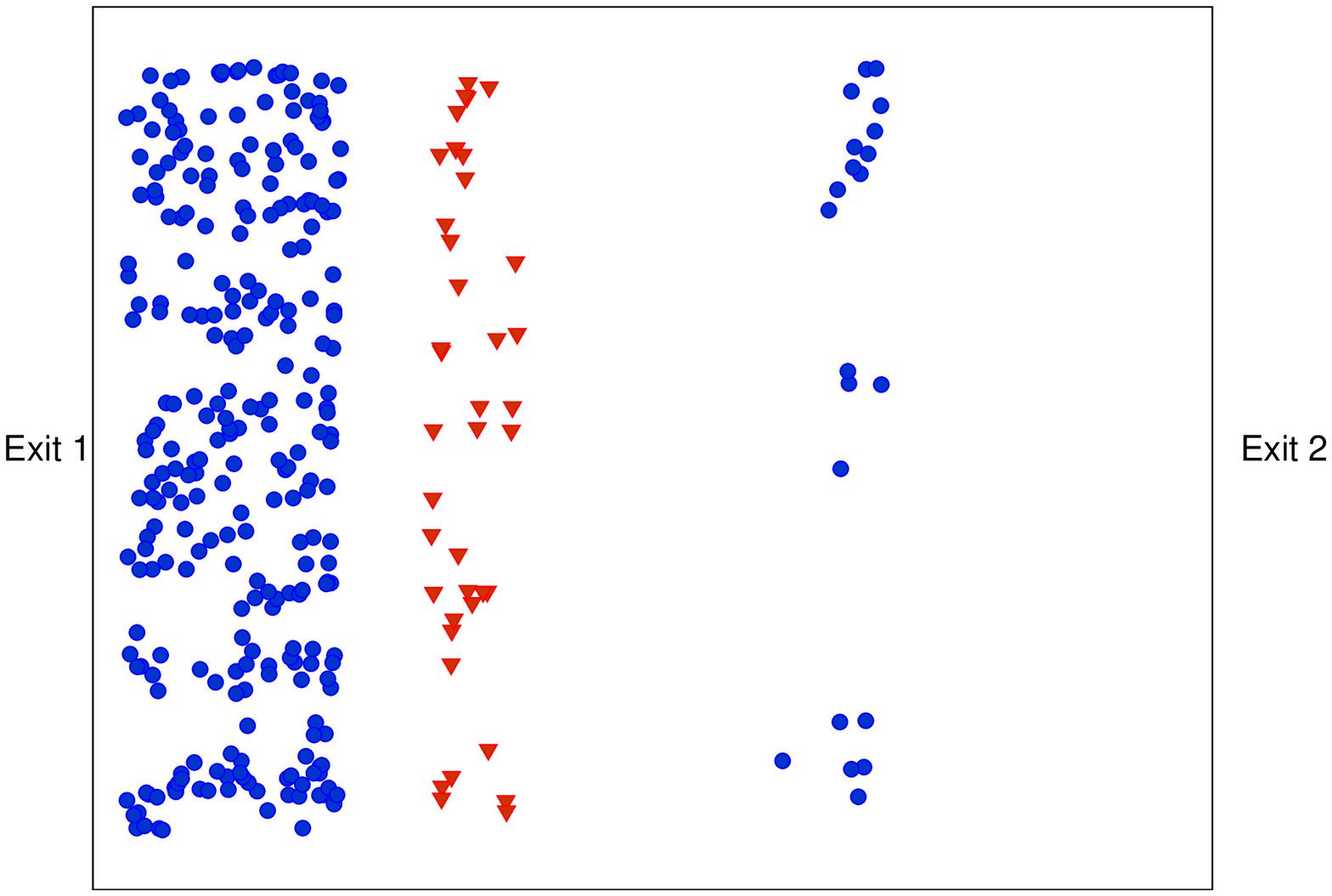}}
\subfigure[Time $t=0.8$]{\includegraphics[width=0.325 \textwidth]{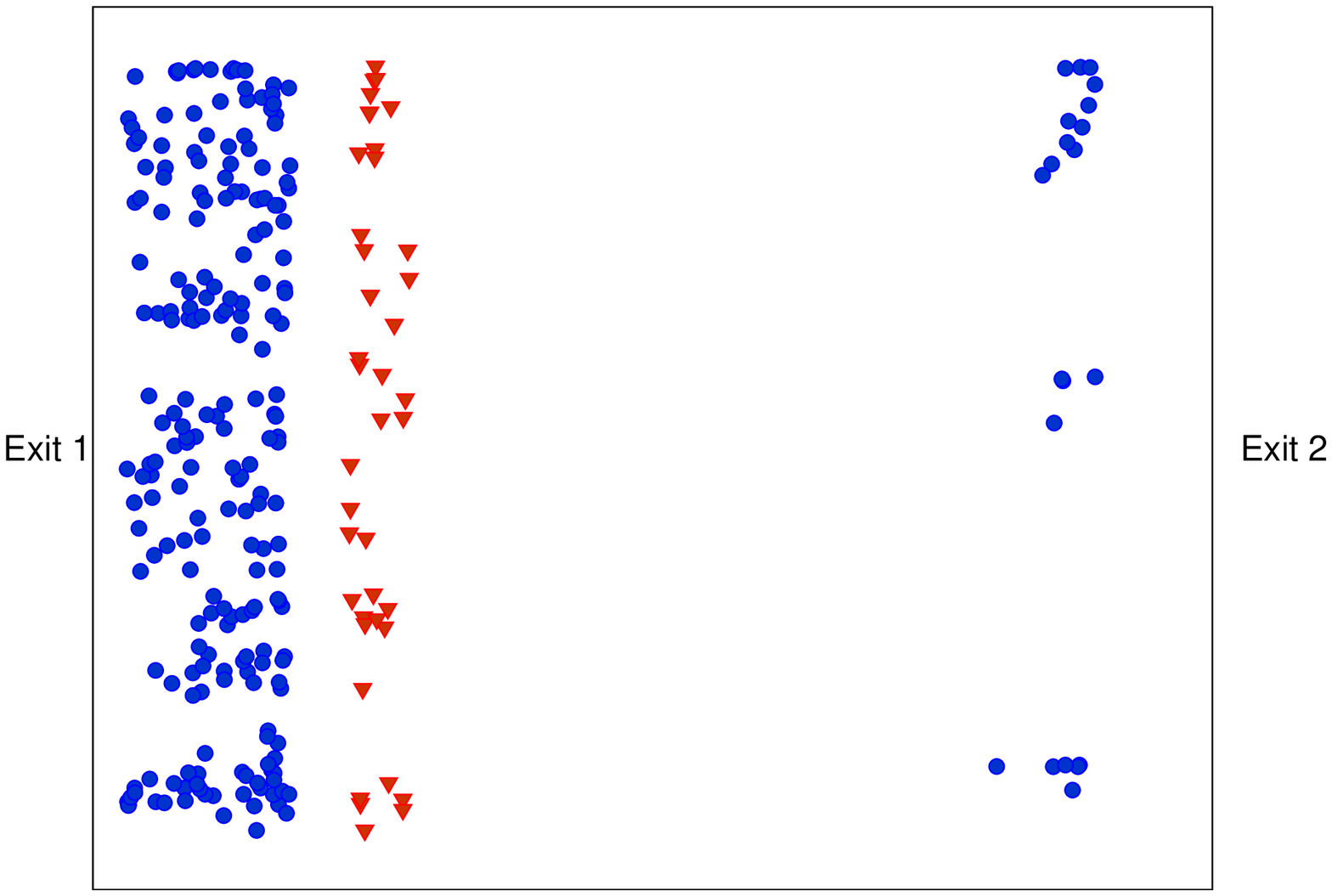}}
\subfigure[Time $t=1$]{\includegraphics[width=0.325 \textwidth]{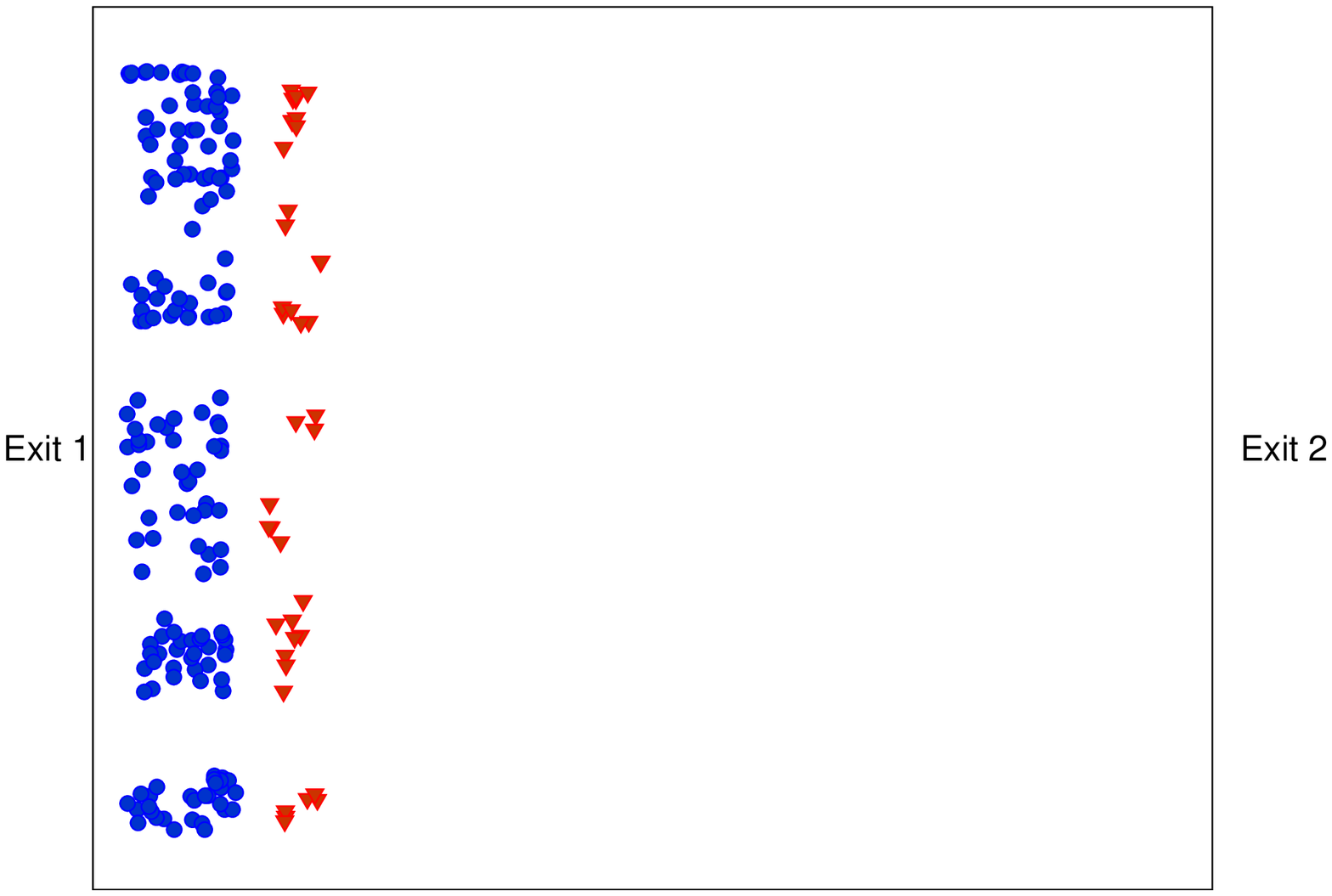}}\\
\subfigure[Exit percentage]{\includegraphics[width=.45 \textwidth]{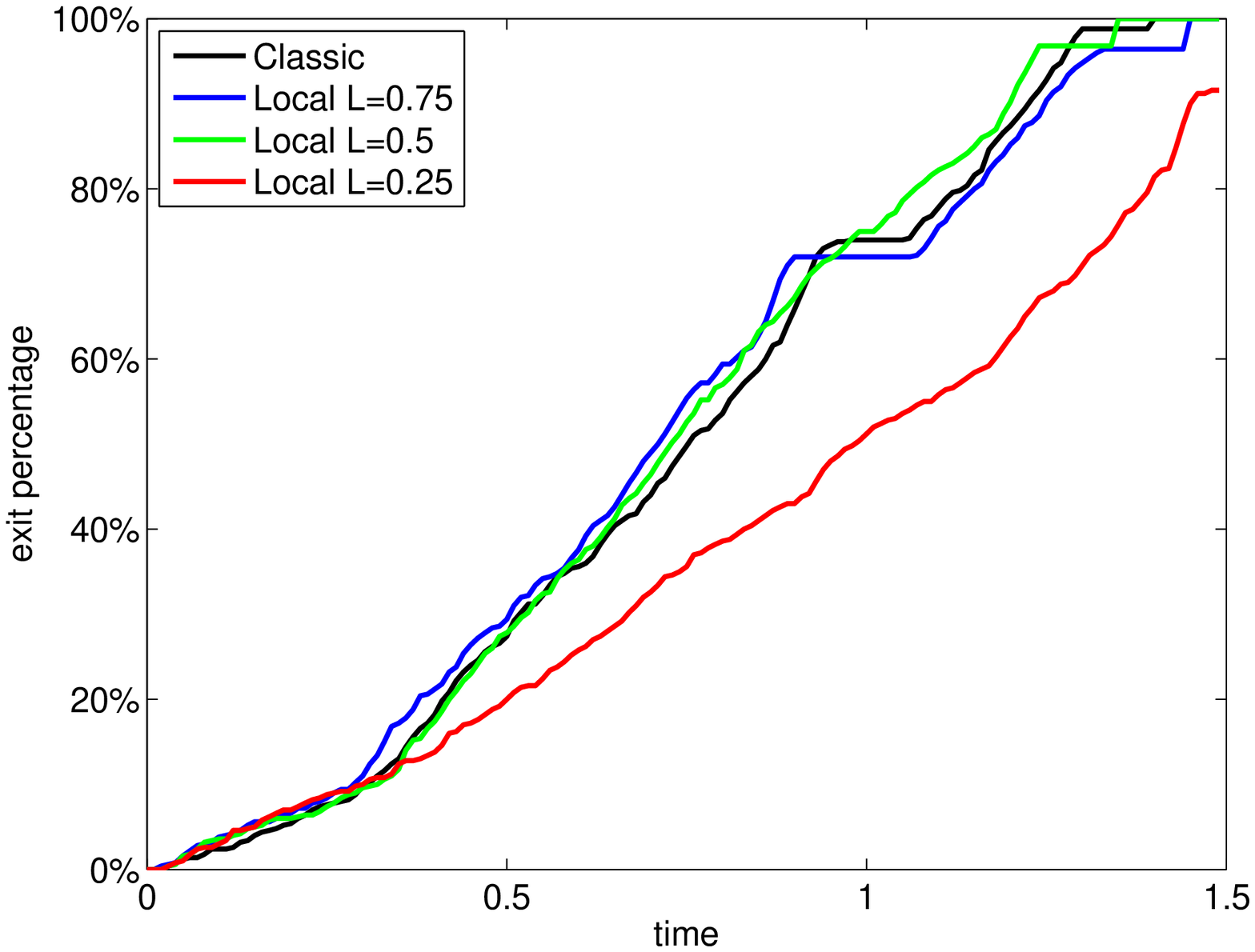}}
\end{center}
\caption{Dynamics of the microscopic model in a two-dimensional
corridor with two exits at the right and left. We observe a
similar behavior as in the 1D simulation in \ref{s:1dcorrmac} -
individuals (visualized by red triangles) initially decide to move
to the more distant exit, but after the congestion at the resolves
in time, they turn around and take the closer
exit. (a) - (f): particle solution for different times and $L=0.25$, (g): exit percentage over time for different values of L. }
\label{f:corridor_micro}
\end{figure}
We illustrate the dynamics of the microscopic model in a
two-dimensional symmetric corridor $\Omega = [0,1] \times [0,
\frac{1}{2}]$  with exits at the left and right side, i.e. $x = 0$
and $x = 1$. The 1D case of section \ref{s:1dcorrmac} can be
interpreted as a projection of this two-dimensional geometry. We
consider the same initial distribution of individuals, i.e. the
positions of all $500$ particles are distributed according to the
initial pedestrian density  \eqref{e:initsymcorridor}. For
$L=0.25$, Figure \ref{f:corridor_micro}(a)-(f) nicely illustrates
a similar turn-around behavior as in the 1D macroscopic
simulations. At the beginning the group close to the left exit
splits, one part exits through the left exit the other one moves
towards the more distant right exit. As the density close to the
left exit decreases in time, the group moving towards the more
distant exit splits again, i.e. parts of the group turn around and
move back again. We marked all individuals, which initially moved
towards the right but then turn around, with red triangles.
Furthermore, Figure \ref{f:corridor_micro}(g) shows the change of
the evacuation performance for different sizes of the local vision
cones $L$. Here we plot the percentage of the total initial mass
outside the domain versus time. Decreasing $L$ and hence the
perceived information, we observe that the overall evacuation
performance first is merely diminished, and only begins to drop
significantly after a certain threshold. The evacuation time will
approach the uninformed eikonal case $L=0$, which is not shown.
All parameters can be found in Appendix \ref{a:2dmicro}.

\subsection{2D non-symmetric corridor - macroscopic model}
\label{s:num2dmacro}
Now we turn to the macroscopic model in two dimensions. Again we
consider the corridor $\Omega=[0,1]\times[0,\frac{1}{2}]$, the
exits however form only a part of the left and right edges, hence
we obtain a fully two-dimensional dynamics where boundary
conditions matter. The left exit is located between $(0,0)$ and $(0,0.1)$ and the right exit is the segment connecting $(1,\tfrac{1}{2})$ and $(1,0.4)$.
The initial density Figure \ref{fig-2dmacroglobal}(a) is given as a low density group of pedestrians on the left and a high density group on the right
\begin{equation}
\rho_0(x,y) = \begin{cases}
0.1 & 0.05\leq x \leq 0.3 \,,\, 0\leq y \leq 0.25, \\
0.95 & 0.6\leq x \leq 0.95,\\
0 & \text{otherwise}.
\end{cases}
\end{equation}

\begin{figure}
\begin{center}
 \subfigure[Time $t=0$]{ \includegraphics[height=.205\textwidth, width=.4325\textwidth]{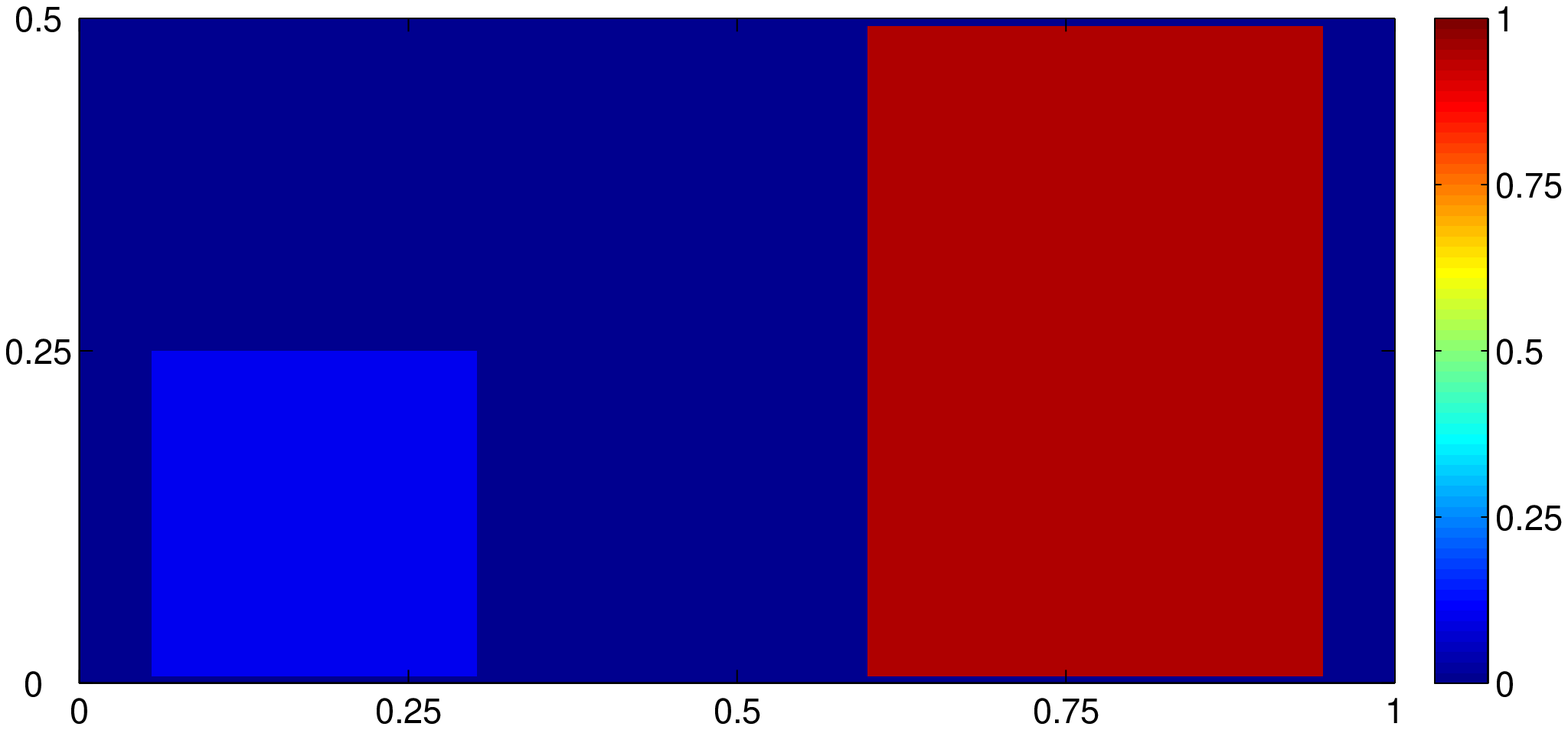} }\hspace{1mm}
\subfigure[Time $t=0.8$]{\includegraphics[keepaspectratio=true,width=.45\textwidth]{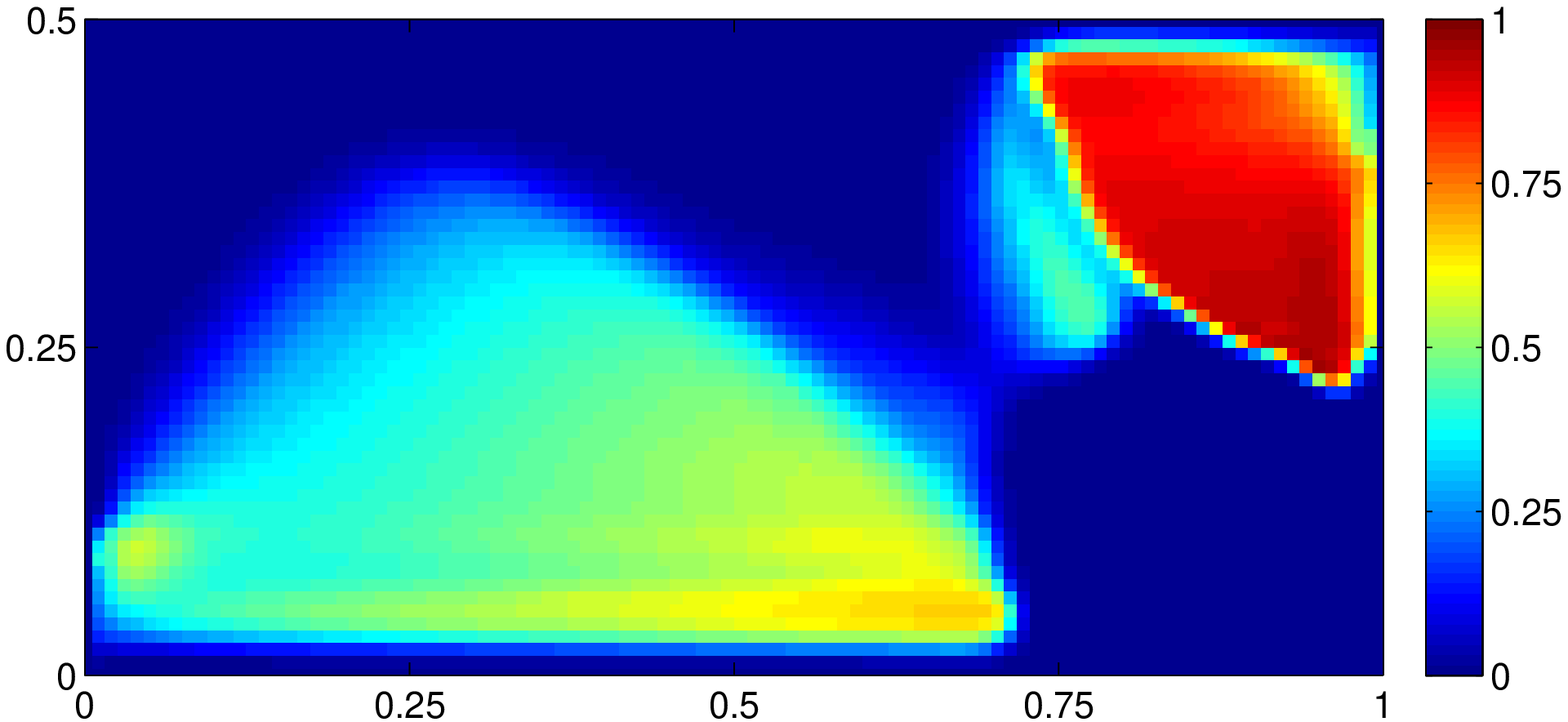}}\\
\subfigure[Time $t=1.07$]{\includegraphics[keepaspectratio=true,width=.45\textwidth]{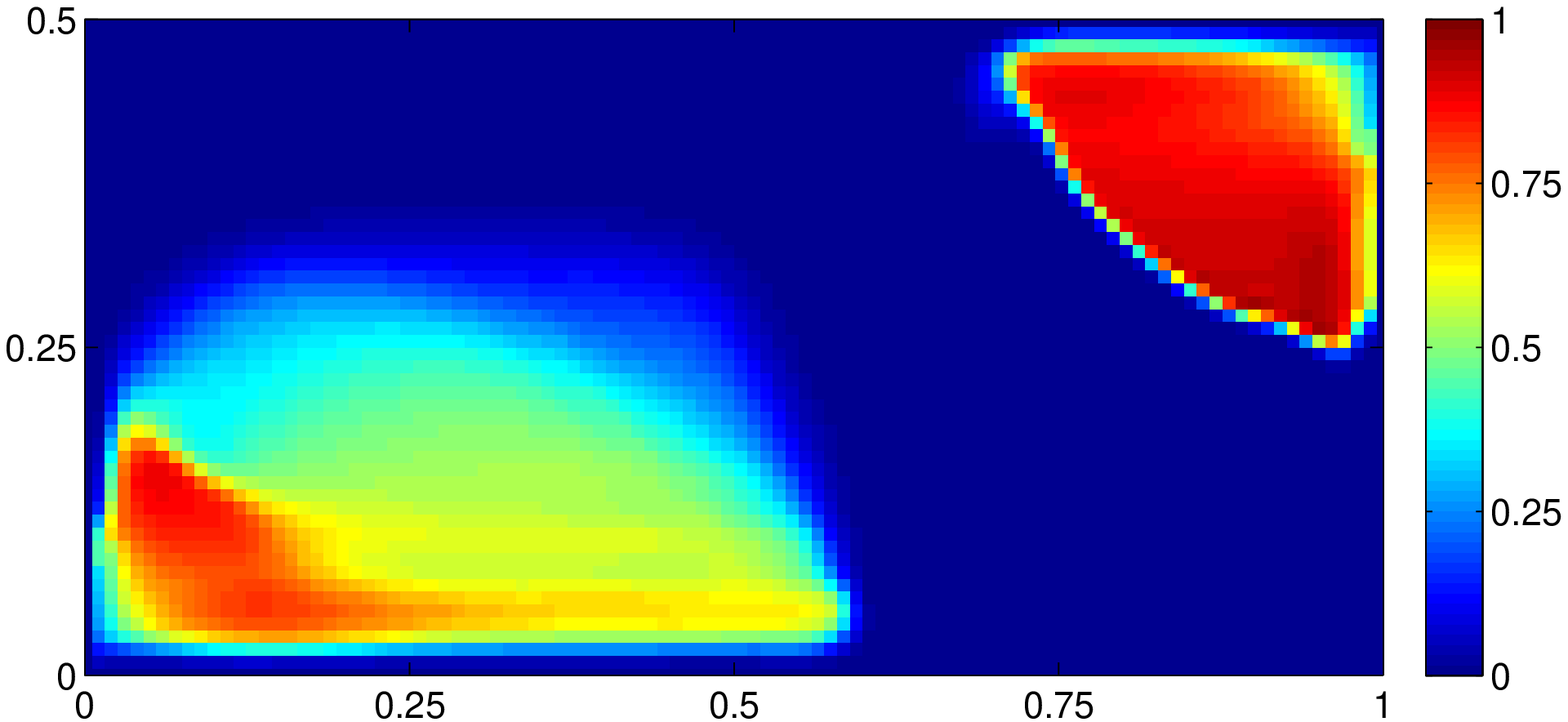}}
\subfigure[Time $t=1.4$]{\includegraphics[keepaspectratio=true,width=.45\textwidth]{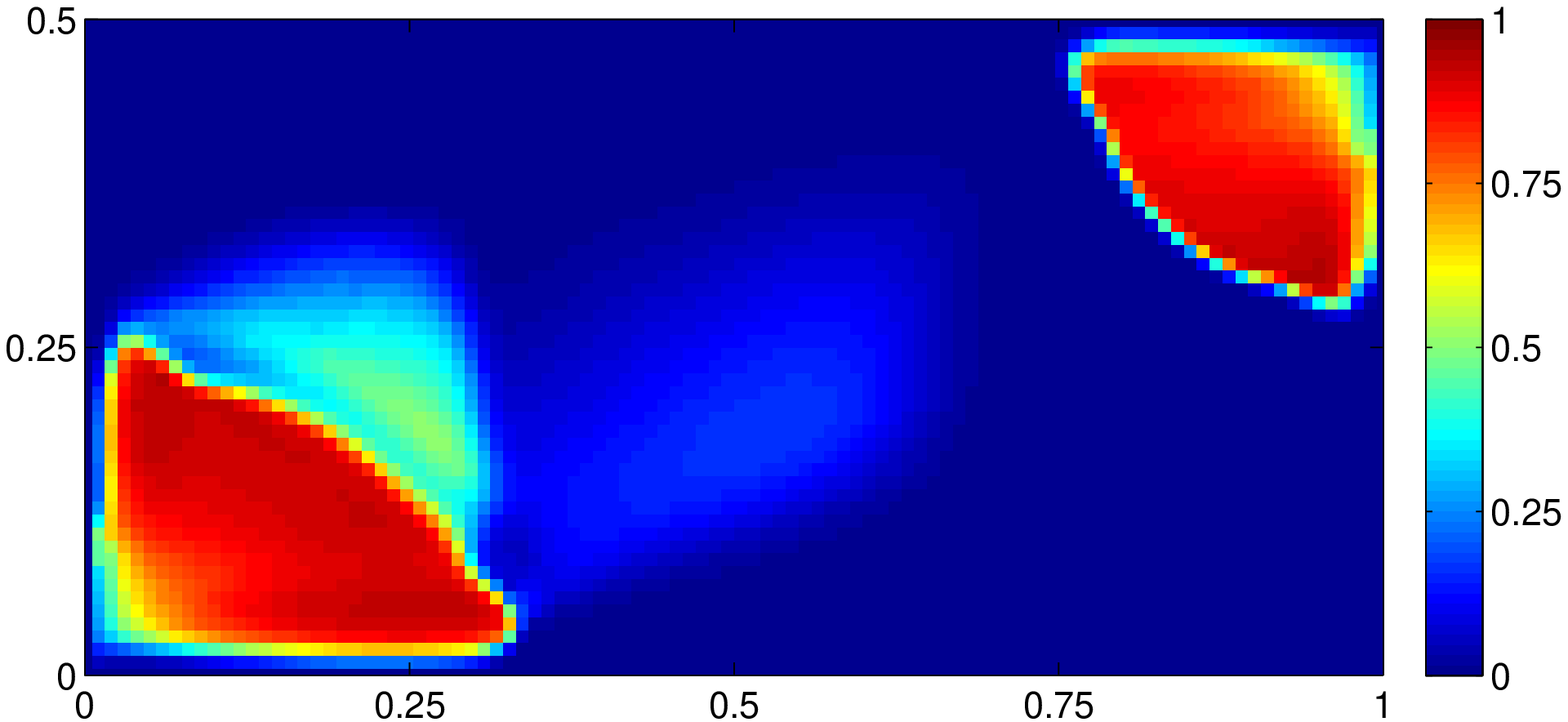}}\\
\subfigure[Time $t=2.1$]{\includegraphics[keepaspectratio=true,width=.45\textwidth]{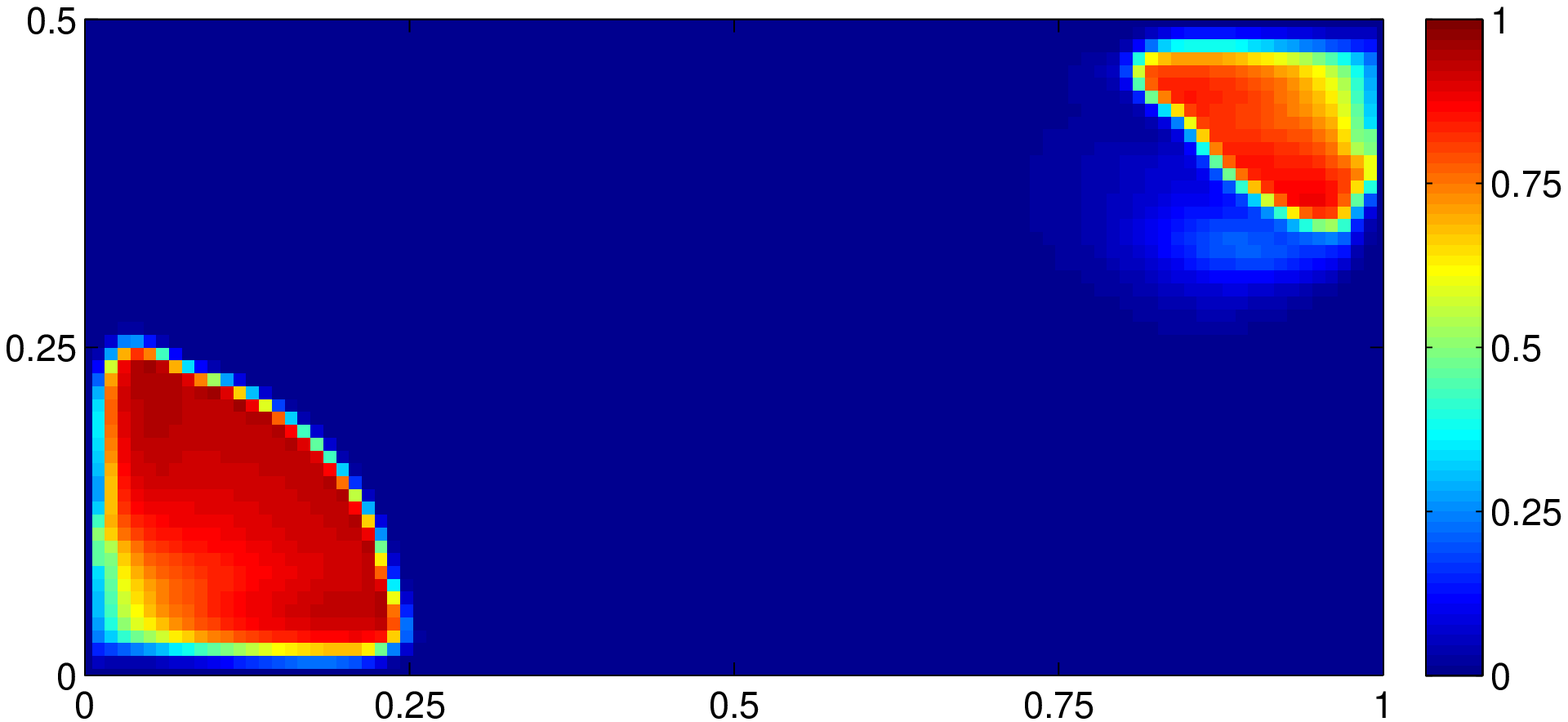}}
\subfigure[Time $t=2.75$]{\includegraphics[keepaspectratio=true,width=.45\textwidth]{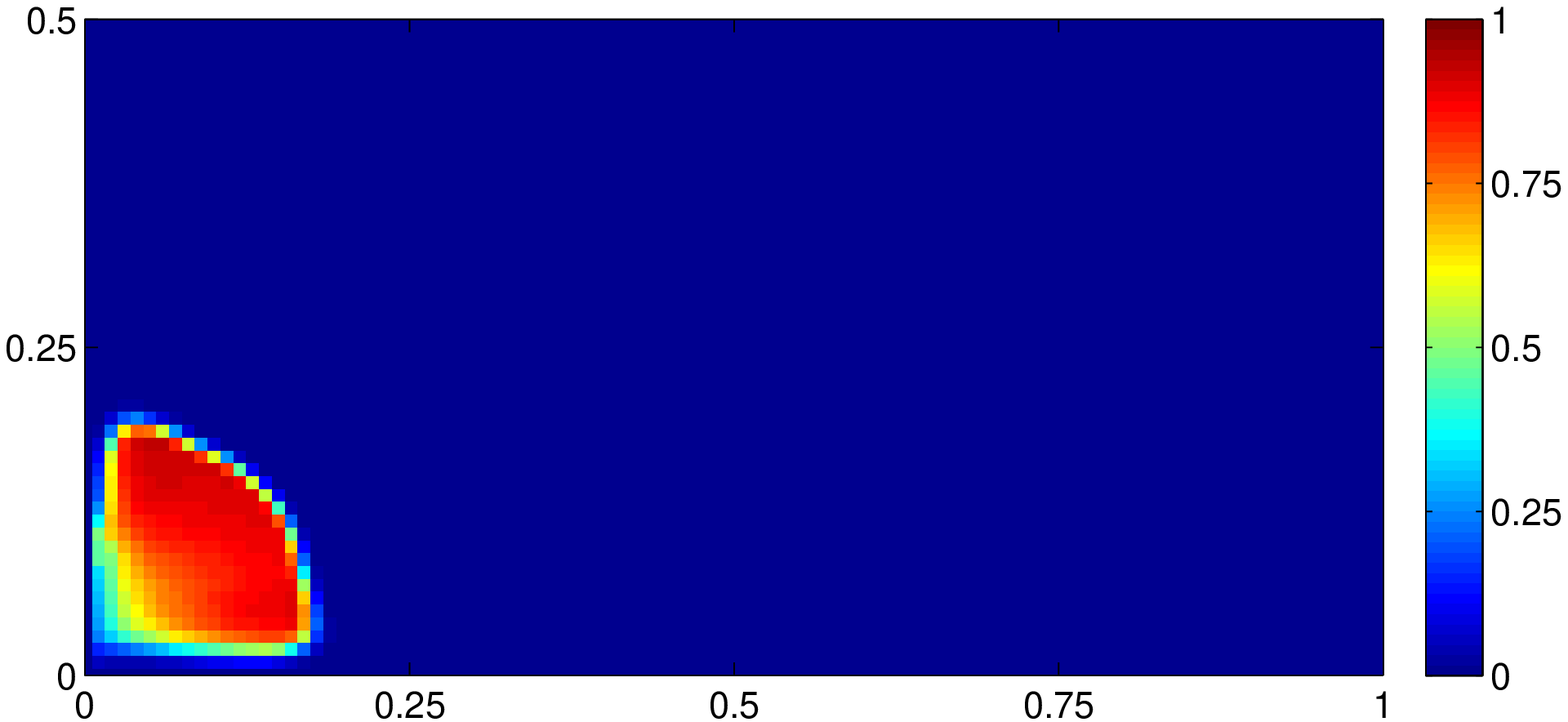}}\\
\end{center}
\caption{Two-dimensional macroscopic dynamics : We simulate model \eqref{e:localmodel} with global visual perception $V_x=\Omega$. Two groups of pedestrians are initially placed in a corridor with a lower left and an upper right exit. The high density group separates according to the path optimization mechanism, as illustrated in several time snapshots of the density in (a) - (f). The right exit is vacated before the left exit and  the final evacuation time (not shown here) is $\approx 3.4$.}
\label{fig-2dmacroglobal}
\end{figure}

We first study the case of global vision $L=\infty \Leftrightarrow
V_x=\Omega$ in Figure \ref{fig-2dmacroglobal}. In (b), the low
density group turns towards left and is quickly vacated. The high
density group on the other side splits along a curve of sonic
points. Pedestrians turning to the right  cause a jam in front of
the right exit, whereas left-turning pedestrians occupy the
corridor in a rarefaction-type manner inherited by the physical
flux law \eqref{fluxlaw}. Upon arrival at the left exit,
pedestrians pile up and form a new jam (c). Hence, a fraction of
the density turns around again and heads for the right exit (d,
around $(0.5,.25$)), having to cross most of the corridor again
(e). However, most of the pedestrians are committed to the left
exit and do not turn, because the severeness of the left jam does
not compensate their expected travel time, and the left exit is
vacated later than the right exit (f).

Compared to the classical Hughes model, the relaxation term
\eqref{e-relaxation} and the conviction-based interaction
\eqref{e:localinteraction}-\eqref{e:conviction} allow for a smooth
turning behaviour. The wall-repulsion \eqref{e:wall} causes a
density gap, which is different to zero-flux conditions as in
\cite{HuangonHughes2009}, but prevents any spurious effects of the
boundary flux.

\begin{figure}
\begin{center}
 \subfigure[Time $t=0.25$]{ \includegraphics[keepaspectratio=true,width=.45\textwidth]{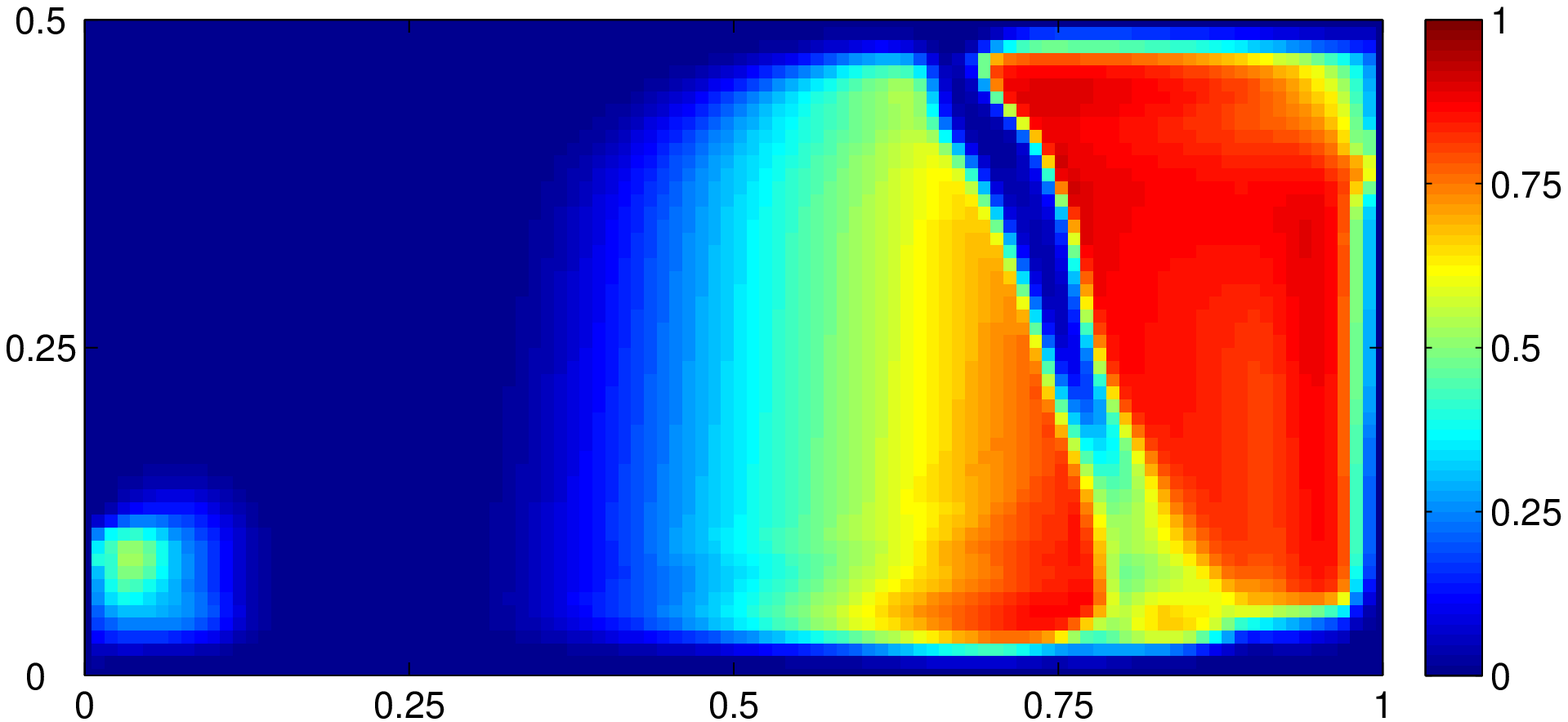}}
 \subfigure[Time $t=0.8$]{ \includegraphics[keepaspectratio=true,width=.45\textwidth]{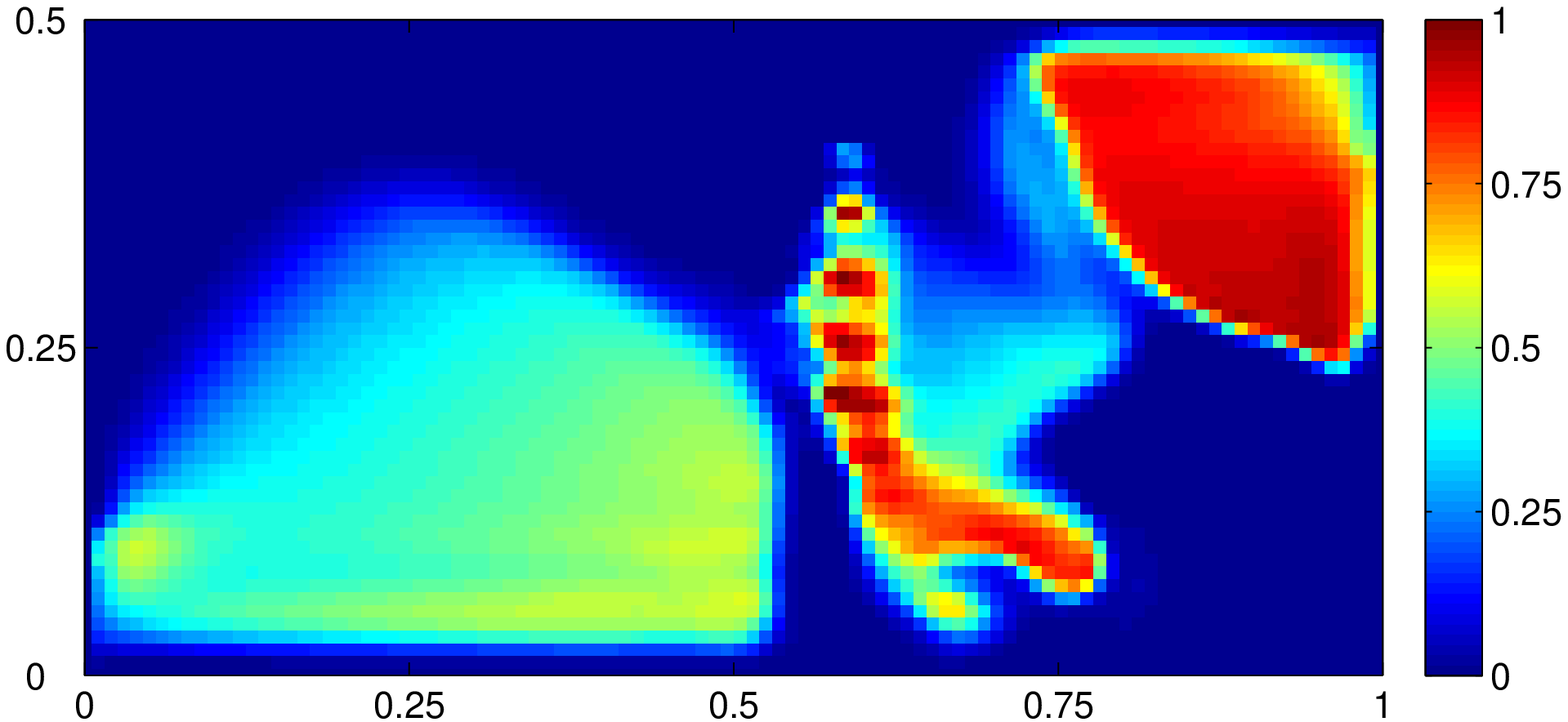}}\\
 \subfigure[Time $t=1.07$]{ \includegraphics[keepaspectratio=true,width=.45\textwidth]{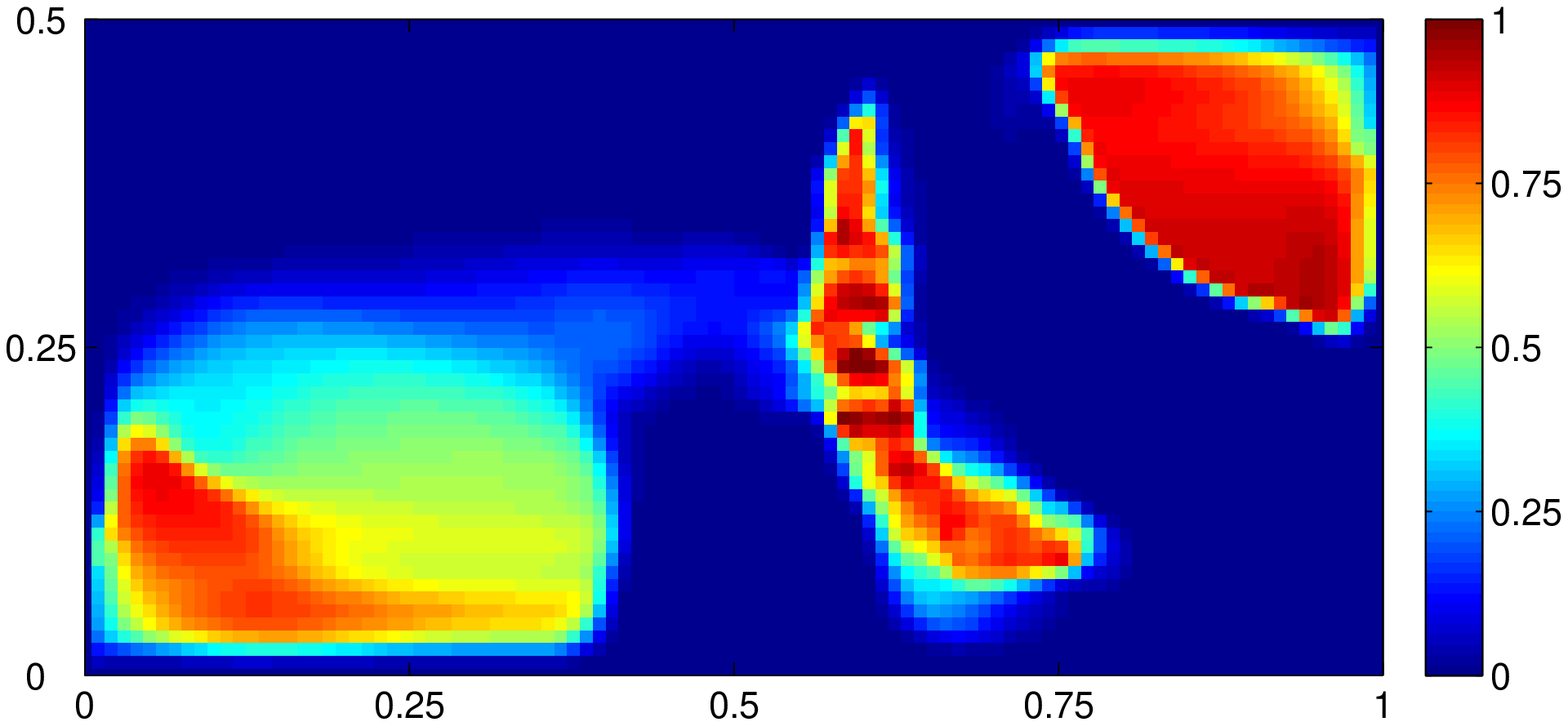}}
 \subfigure[Time $t=1.4$]{ \includegraphics[keepaspectratio=true,width=.45\textwidth]{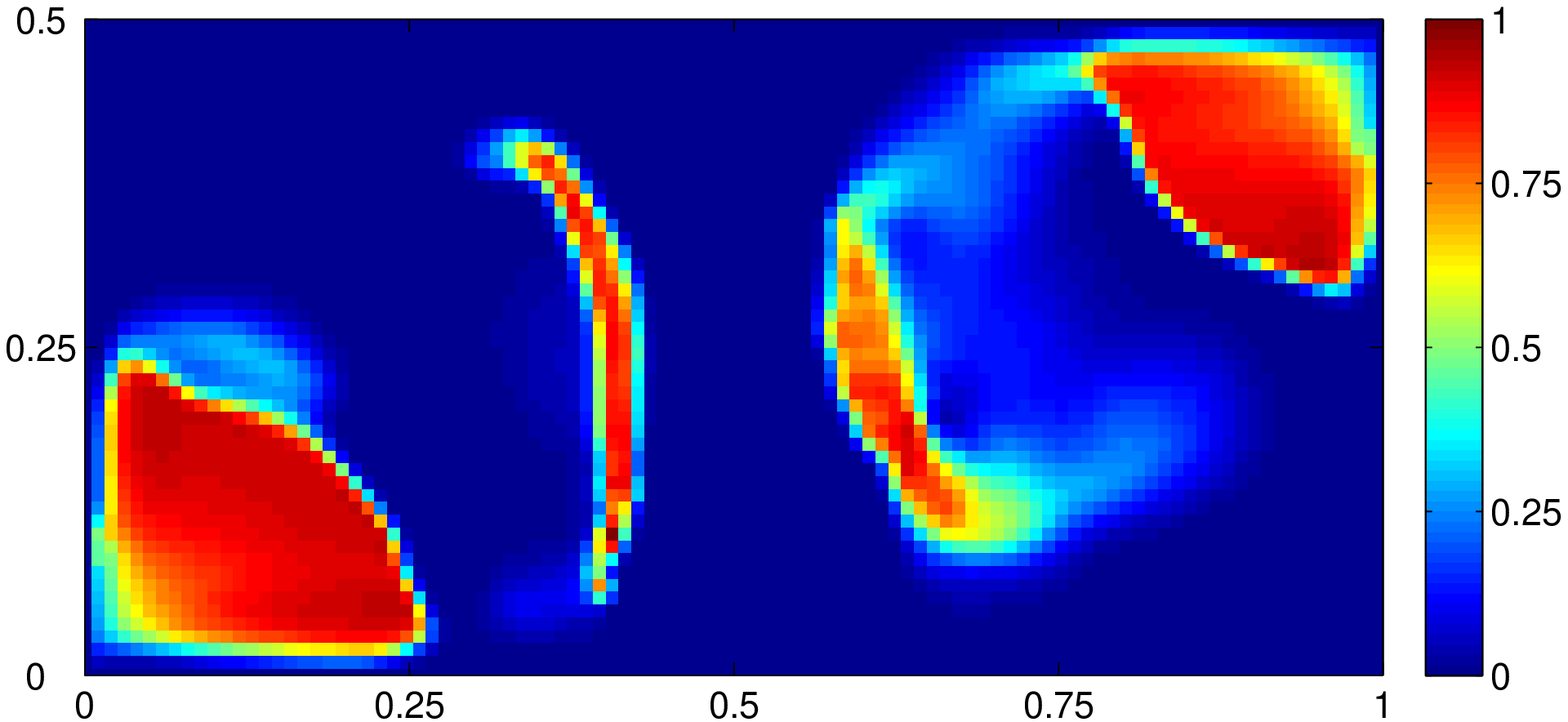}}\\
 \subfigure[Time $t=2.1$]{ \includegraphics[keepaspectratio=true,width=.45\textwidth]{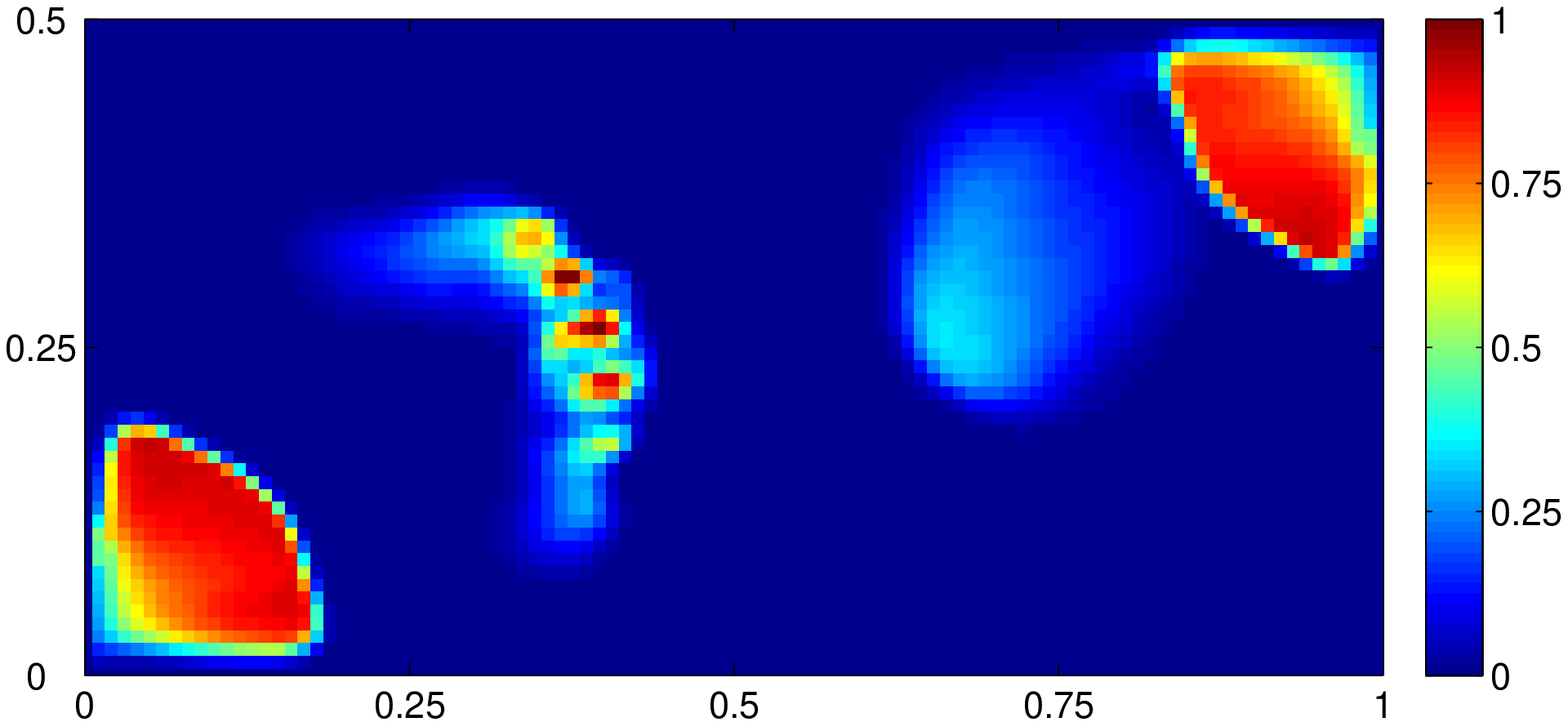}}
 \subfigure[Time $t=2.75$]{ \includegraphics[keepaspectratio=true,width=.45\textwidth]{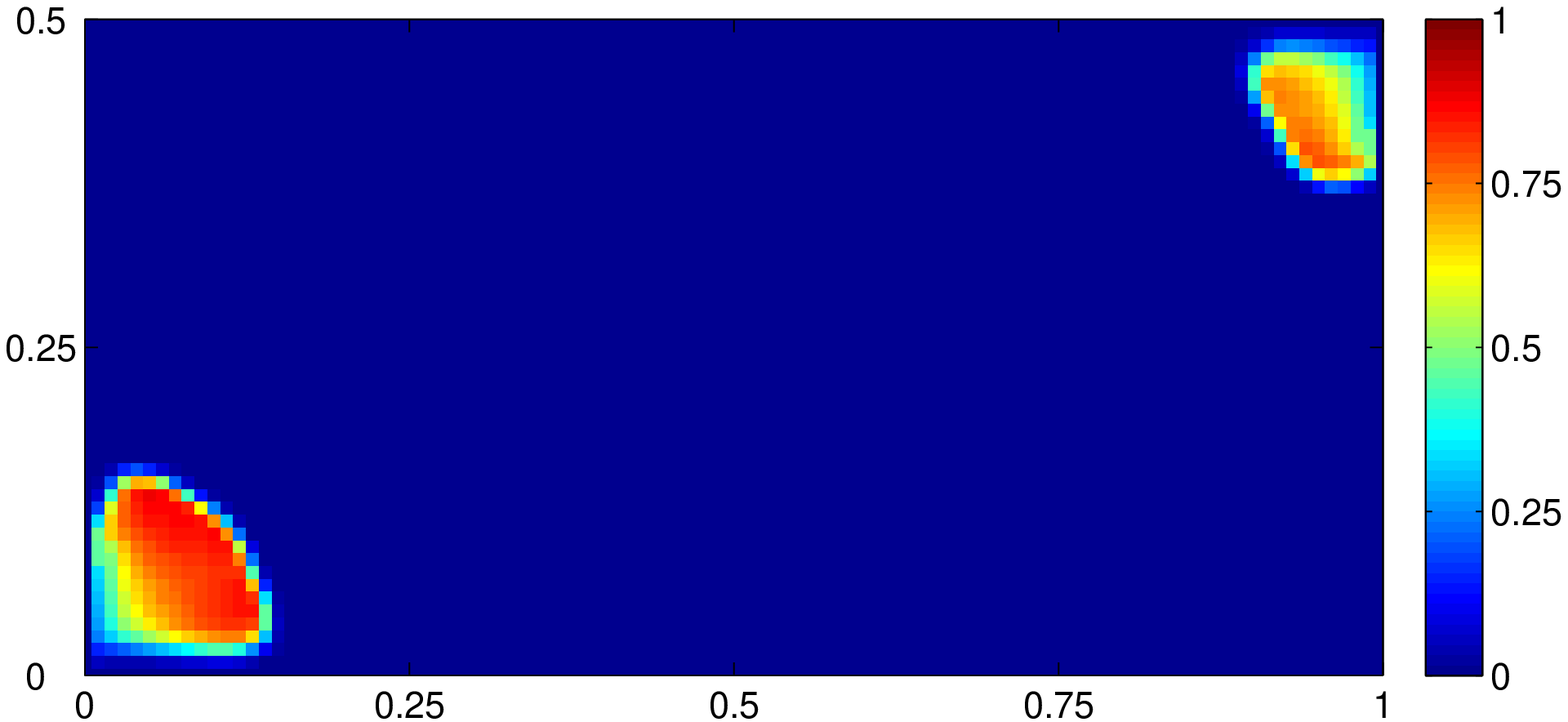}}\\
\end{center}
\caption{Two-dimensional macroscopic dynamics : We simulate model \eqref{e:localmodel} with local visual perception $L=0.75$ and the same initial configuration as Figure \ref{fig-2dmacroglobal}. In the time snapshots of the density, we observe again an initial separation of the high density group (a). A waiting phenomena of up to two local groups is clearly observed (b-d). Crucially, waiting pedestrians are able to choose their exit later (d-e), which leads to a rather simultaneously clearing of both jams (f), as opposed to Figure \ref{fig-2dmacroglobal}. The final evacuation time (not shown here) is $\approx 3.025$.}
\label{fig-2dmacrolocal}
\end{figure}

It is clear that even the planning algorithm incorporated into the
classic Hughes' model does not lead to an optimal evacuation. One
reason for suboptimality in Figure \ref{fig-2dmacroglobal} is that
pedestrians \emph{have to} keep in motion constantly but cannot
predict the occurrence of future jams. Hence, pedestrians are
likely to walk towards an exit that will be blocked in the future,
as seen in the example.

In Figure \ref{fig-2dmacrolocal} we study the same initial
configurations with localised perception and a radial vision cone
of diameter $L=0.75$. The initial separation phase (a) is similar
to Figure \ref{fig-2dmacroglobal}. As pedestrians move from the
right to the left, the right jam gets out of sight and its
influence diminishes. At the same time, the density on the left
becomes visible. At a certain point a balance is achieved and
pedestrians locally accumulate around an area of equal walking
costs, where in this case they are able to stop (b-c). Hence, we
observe a waiting behavior which cannot be observed in classical
Hughes' type models. Looking from (c) to (d), a high density jam
forms at the left exit, which causes part of the left-walking
pedestrians to turn right after enough conviction is gathered.
Together with some outflow of the first waiting group, a second
waiting group is formed (d). Pedestrians in a waiting group choose
to move if one direction becomes favorable. As both jams at the
exits reduce at the same rate, the left waiting group walks to the
left and vice versa (e). Finally, the waiting groups dissolve and
the exits get vacated at a rather similar time (f), and the
evacuation time improves compared to Figure
\ref{fig-2dmacroglobal}.

\begin{figure}
\begin{center}
\includegraphics[width=0.6\textwidth]{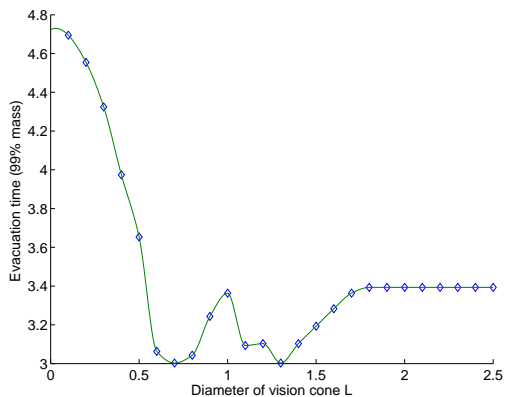}
\end{center}
\caption{Evacuation time of the macroscopic model for varying vision cone diameters $L$: The performance can improve with limited vision. $L=0$ corresponds to the eikonal case whereas $L>2.5$ implies unlimited vision in the given corridor.}
\label{fig-2dmacroevac}
\end{figure}

The fact that evacuation performance can improve under limited
perception of information is surprising at first glance. Our
simulations give an good explanation for the phenomena: As
pedestrians show a waiting behaviour, they are less likely to be
trapped in the jam arising at the left exit. In fact, the waiting
is made possible by the combined effect of multiple sonic points
due to local vision and the smoothed turning mechanism. Naturally,
this is not generally the case and cannot be a-priori answered.
For small vision lengths $L$, the dynamics will converge to the
velocity field given by the eikonal equation, which is our initial
configuration will exit almost all pedestrians using the right
exit and perform poorly. In Figure \ref{fig-2dmacroevac}, we study
the evacuation time of $99\%$ of the initial mass as a function of
the diameter $L$. We unexpectedly find in this case wo optimal
values of $L$ for which the evacuation time is minimal. The
classical Hughes' evacuation time ($L$ large) is always less or
equal than the eikonal case ($L=0$), however there is no way to
generally argue that there will always be a minimum in between.

\section{Conclusion}\label{s:conclusion}\label{s:conclusions}
In this work we introduced a localized smooth variant of Hughes's
model for pedestrian crowd dynamics. We regularised the original
model, composed by an eikonal equation and a continuity equation.
First by a local interaction term, which intermediates individual
pointwise path optimisation towards conviction-weighted walking
directions. Secondly, we allowed pedestrians to stop, if they are
undecided, using a smooth approximation of the normalization
condition. Most importantly, we restrict the information on the
global density each pedestrian can use for her planning algorithm
to a local surrounding area. This is a very realistic assumption
for large crowds that has not been considered in the literature so
far. We presented both a microscopic and a macroscopic version,
and illustrated the model components in the one-dimensional case.
In terms of analytical results, a rigorous theory for these kind
of equations in multiple dimensions is currently out of reach to
the best of our knowledge. However, we were able to identify some
qualitative properties of the dependence of the optimal path on
the vision cone that allow for a reduction of complexity.

The numerical approximation of the model on both levels has been discussed and utilizes several techniques including sweeping and marching methods, particle approximations and finite volume schemes. Though the numerical costs of computing a solution have increased due to the inhomogeneity of vision cones, we observe new effects and phenomena in the model based on our simulations.
First, local groups of pedestrians are able to change repeatedly their walking direction towards an exit. This 'multiple turn-around behaviour' can explained by the multiple sonic points of the estimated walking costs, which by construction cannot occur in the classical case.
We stress that the smoothening and conviction terms are crucial to allow a swift turning behavior, which is not trivial to model in first order equations.
Second, the model replicates a waiting behaviour in case of undecided pedestrians, i.e. in areas where locally estimated walking costs towards different exits are equal.
Surprisingly, we found that this waiting phenomena induced by localized information can improve the overall evacuation performance of the crowd. In our numerical example
we observed two local minima when varying the vision cone diameter.

To conclude, we have demonstrated that local vision effects can be implemented into first order models for crowd dynamics. This leads to new unforeseen phenomena and complex behavior, whose partial understanding via qualitative properties is important for the applicability of such equations to social-economic problems. On the other hand, this work illustrates the limitations to first order models such as Hughes', where planning decisions are instantaneously updated and no social or cognitive memory is taken into account. From our point of view Hughes' type equations constitute an important building block for crowd models and a mathematically important object of study, but it cannot be expected to be fully realistic.
\section*{Acknowledgment}

JAC acknowledges support from projects MTM2011-27739-C04-02 and
the Royal Society through a Wolfson Research Merit Award. JAC and
SM were supported by Engineering and Physical Sciences Research
Council (UK) grant number EP/K008404/1. MTW acknowledges financial
support from the Austrian Academy of Sciences \"OAW via the New
Frontiers Group NSP-001.
Preprint of an article submitted for consideration in Mathematical Models and Methods in Applied Sciences, \copyright 2015  World Scientific Publishing Company, http://www.worldscientific.com/worldscinet/m3as.
\appendix
\section{Simulation parameters}
\subsection{1d macro parameters in Section \ref{s:1dcorrmac}}
\label{a:1d}
The macroscopic simulation in 1D was implemented in \texttt{MATLAB}. The domain $[0,1]$ was uniformly discretized with $\Delta x=10^{-4}$. The time step was set to $\Delta t = 5\cdot 10^{-5}.$
The vision cone was defined as $V_x=[x-L/2,x+L/2]\cap [0,1]$ with $L=0.75$. 
The radial interaction kernel $\mathcal{K}$ was chosen as the indicator function on the interval $[0,0.05]$. 
The smoothed projection operator was chosen as in \eqref{e-relaxation} with $\ell=0.05$ and $k=25$.
The wall repulsion $W(x)$ was neglected in 1D. Absorbing boundary conditions were applied at both exits. 
The cost function was numerically bounded at $c(\rho)\leq 10^4$. 
\subsection{2d micro parameters in Section \ref{s:num2dmicro}}
\label{a:2dmicro}
The microscopic simulations are implemented using the software package Netgen/NgSolve. The domain was discretized in $1438$ triangles, the time steps were set to
$\Delta t = 10^{-2}$, the final time to $T=1.5$. At time $t=0$ we distributed the $500$ particles according to the initial datum $\rho_0$ used in subsection \ref{s:1dcorrmac}.
The empirical density $\rho^N_g$ was calculated using Gaussians with variance $\sigma = 0.05$ for the smooth approximation $g$. The width of the local vision cone
was set to $L = 0.25$ in Figure \ref{f:corridor_micro}(a)-(e) and to $L = 0.75, 0.5$ and $L =0.25$ in Figure \ref{f:corridor_micro}(f).

\subsection{2d macro parameters in Section \ref{s:num2dmacro}}
\label{a:2dmacro}
The macroscopic simulation in 2D was implemented in \texttt{MATLAB}. The domain $\Omega = [0,1]\times[0,\frac{1}{2}]$ was uniformly discretized with $\Delta x=\Delta y =10^{-3}$. The time step was set to $\Delta t = 5\cdot 10^{-3}.$ A Fast Sweeping Method was used to solve the eikonal equations.
The vision cone was defined as $V_x=\{y: \norm{y-x}_2\leq \frac{L}{2}\}\cap \Omega$ with varying diameter $L$.  
The radial interaction kernel $\mathcal{K}$ was chosen as
\begin{equation*}
\mathcal{K}(x)=\begin{cases}\exp\left(-\frac{b^2}{b^2-\norm{x}^2}\right) & \norm{x}\leq b, \\
0 & \text{else}.
\end{cases} 
\end{equation*}
with $b=0.05$. 
The smoothed projection operator was chosen as in \eqref{e-relaxation} with $\ell=0.05$ and $k=25$.
The wall repulsion $W(x)$ was defined with the width of the boundary layer function $\chi_w$ set to $w=0.025$.
The wall costs $W(x)$ were numerically bounded at $W(x)\leq c(0.975)$. The cost function was numerically bounded at $c(\rho)\leq 10^3$. The numerical accuracy for vanishing density was set to $10^{-7}$. 


\bibliographystyle{abbrv}
\bibliography{localhughes}

\end{document}